\theoremstyle{plain}\newtheorem{theorem}{Theorem}
\newtheorem{lemma}[theorem]{Lemma}
\newtheorem{proposition}[theorem]{Proposition}
\newtheorem{corollary}[theorem]{Corollary}
\theoremstyle{definition}\newtheorem{definition}{Definition}
\theoremstyle{definition}\newtheorem{example}{Example}
\begin{document}

\title{FINITE SETS OF AFFINE POINTS WITH
 UNIQUE \\ ASSOCIATED MONOMIAL ORDER QUOTIENT BASES}

\author{ZHE LI, SHUGONG ZHANG, and TIAN DONG\\
\footnotesize School of Mathemathics, Key Lab.  of Symbolic Computation\\
\footnotesize  and Knowledge Engineering (Ministry of Education),\\
\footnotesize   Jilin University, Changchun 130012,  PR China
\footnote{lizhe200809@gmail.com,sgzh@jlu.edu.cn,dongtian@jlu.edu.cn}}

\date{}

\maketitle


\begin{abstract}
The quotient bases for zero-dimensional ideals are often of interest in the investigation of multivariate
polynomial interpolation,  algebraic coding theory, and computational
molecular biology, etc. In this paper, we discuss the properties of zero-dimensional ideals with unique monomial order quotient bases, and verify that the vanishing ideals of Cartesian sets have unique monomial order quotient bases.
Furthermore, we reveal the relation between Cartesian sets and the point sets with unique associated monomial order quotient bases.

Keywords: Monomial order quotient basis;  Cartesian set; Zero-dimensional ideal.

2000 Mathematics Subject Classification: 13P10
\end{abstract}

\section{Introduction}
\label{sec:int}

Let $\mathbb{F}$ be a field of characteristic zero, and suppose that $\mathbb{F}[\bm{x}]:=\mathbb{F}[x_1,\ldots,x_d]$
is the polynomial ring in $d$ variables over $\mathbb{F}$.
A finite linearly independent set $\Theta$ of linear functionals, mapping $\mathbb{F}[\bm{x}]$ to $\mathbb{F}$, is said to admit an \emph{ideal interpolation scheme} if
$$\ker\Theta=\{p \in \mathbb{F}[\bm{x}]: \theta(p)=0, \mbox{for~all~}\theta\in \Theta\}$$
forms a zero-dimensional ideal, cf.\cite{Birkhoff1979, GascaSauer2000, dBo2005, Shekhtman2009, LZD2011}. It was shown in \cite{deBoor1991, MMM1993} that this holds if and only if there exists a finite set $\Xi \subset \mathbb{F}^d$ and $D$-invariant finite-dimensional subspaces $Q_{\bm{\xi}}\subset \mathbb{F}[\bm{x}]$,
$\bm{\xi}\in \Xi$, such that
$$
\mathrm{span}_{\mathbb{F}}\Theta=\mathrm{span}_{\mathbb{F}}\{\delta_{\bm{\xi}}\circ q(D):q\in Q_{\bm{\xi}},\bm{\xi}\in \Xi\},
$$
where $D$ is the differentiation symbol and
$\delta_{\bm{\xi}}$ denotes the point evaluation functional at $\bm{\xi}$.

We now assume that $\ker\Theta$ forms a zero-dimensional ideal. For an order ideal $O\subset \mathbb{F}[\bm{x}]$,
if the set $\{\bm{t}+\ker\Theta: \bm{t}\in O\}$ provides an $\mathbb{F}$-vector space basis for
quotient ring $\mathbb{F}[\bm{x}]/\ker\Theta$,
then $O$ forms a \emph{monomial order quotient basis} for $\ker\Theta$.
In many applications such as multivariate polynomial interpolation\cite{Sauer1997}, algebraic coding theory\cite{Gei2009}, and computational molecular biology\cite{Lun2010}, the structure of monomial order quotient bases rather than Gr\"{o}bner bases is of particular interest.

The standard example of ideal interpolation is Lagrange interpolation. In this case, $\ker\Theta=\mathcal {I}(\Xi)$, the vanishing ideal of $\Xi$, and a monomial order quotient basis $O$ for $\mathcal {I}(\Xi)$ is also called an \emph{associated monomial order quotient basis} of $\Xi$. As is well known, the structure of $O$ depends not only on the cardinal but significantly on the geometry of $\Xi$, cf. \cite{SauerXu19951, Sauer2006}.
As interpolation point sets with special geometries, Cartesian sets in $\mathbb{F}^d$ have been studied by many authors \cite{Lor1992, GascaSauer2000:2, Cra2004, CDZ2006}.
In 2004, T. Sauer \cite{Sau2004} showed that the vanishing ideal of a Cartesian set has a unique Gr\"{o}bner \'{e}scalier, independent of the monomial order.
The natural questions are: (1) Does a Cartesian set also have a unique associated monomial order quotient basis?
(2) If so, is there any non-Cartesian set satisfying this property?

In this paper, we first introduce two criteria for determining whether a zero-dimensional ideal has a unique monomial order quotient basis or not. With the aid of them, we show that every Cartesian set has
a unique associated monomial order quotient basis, and furthermore that for $d\geq 3$, there always exists at least one non-Cartesian point set that also has this property. The main results of this paper will be put in Section 3.
The next section, Section \ref{sec:pre}, will give some notation and background results.

\section{Notation and Background Results}\label{sec:pre}

In this section, we will settle the key notation used throughout the paper and give some background results. For more details,  we refer the reader to \cite{Lor1992, Buc2006, Buc1970, Buc1985, KR2005}.

We use $\mathbb{N}_0$ to stand for the monoid of
nonnegative integers and boldface type for tuples with their entries
denoted by the same letter with subscripts, for example,
$\bm{\alpha}=(\alpha_1,\ldots, \alpha_d)$.

Henceforward, $\leq$ will denote the usual product order on
$\mathbb{N}_0^d$, that is, for arbitrary
$\bm{\alpha}$, $\bm{\beta}\in\mathbb{N}_0^d$, $\bm{\alpha}\leq \bm{\beta}$ if and only if $\alpha_i\leq \beta_i, i=1,\ldots, d$.
A finite subset $\mathcal {A}\subset \mathbb{N}_0^d$ is \emph{lower} if
for every $\bm{\alpha}\in \mathcal{A}$, $\bm{0}\leq\bm{\beta}\leq \bm{\alpha}$ implies  $\bm{\beta}\in \mathcal{A}$.

A \emph{monomial} ${\bm{x}}^{\bm{\alpha}}\in \mathbb{F}[\bm{x}]$ is
a power product of the form $x_1^{\alpha_1}\cdots x_d^{\alpha_d}$
with $\bm{\alpha}\in \mathbb{N}_0^d$.  Denote by
$\mathbb{T}(\bm{x}):=\mathbb{T}(x_1, \ldots, x_d)$ the monoid of
all monomials in $\mathbb{F}[\bm{x}]$.
A finite monomial set $O\subset \mathbb{T}(\bm{x})$ is called an
\emph{order ideal} if for every $\bm{t}\in O$, $\bm{t'}|\bm{t}$ implies $\bm{t'}\in O$.
Further, the \emph{corner} of an order ideal $O$ is
\begin{equation}\label{corner}
    \mathcal{C}[O]=\{\bm{t}\in \mathbb{T}(\bm{x}): \bm{t}\notin O, x_i|\bm{t}\Rightarrow
\bm{t}/x_i\in O, 1\leq i\leq d\}.
\end{equation}

For each fixed monomial order $\prec$ on $\mathbb{T}({\bm{x}})$, a
nonzero polynomial $f \in \mathbb{F}[{\bm{x}}]$ has a unique \emph{leading
monomial} $\mathrm{LM}_{\prec}(f)$, which is the greatest one
appearing in $f$ w.r.t. $\prec$. For arbitrary $S \subset \mathbb{F}[\bm{x}]$,
the set of leading monomials of all polynomials in $S$ is denoted by $\mathrm{LM}_{\prec}(S)$.
According to \cite{Mor2009}, the monomial set
$$
\mathcal{N}_\prec(\mathcal{I}):=\mathbb{T}({\bm{x}})\backslash \mathrm{LM}_{\prec}(\mathcal{I})=\{{\bm{x}}^{\bm{\alpha}}\in \mathbb{T}({\bm{x}}):
\mathrm{LM}_{\prec}(f)\nmid {\bm{x}}^{\bm{\alpha}}, \forall f\in \mathcal{I}\}
$$
is
the \emph{Gr\"{o}bner \'{e}scalier} of ideal $\mathcal{I}$ w.r.t. $\prec$. In fact,
every Gr\"{o}bner \'{e}scalier of $\mathcal{I}$ forms a monomial order quotient basis for $\mathcal{I}$.

\begin{definition}\cite{Sau2004, CDZ2006}\label{Cartesiandefinition}
A finite set
$\Xi\subset \mathbb{F}^d$ of distinct points is \emph{Cartesian} if and only if there exists
a lower set $\mathcal{A}\subset \mathbb{N}_0^d$ and injective functions $y_i:\mathbb{N}_0\rightarrow \mathbb{F}, 1\leq i\leq d$, such that $\Xi$ can be written as
\begin{equation}\label{Cartesian}
\Xi=\left\{\left(y_1(\alpha_1), \ldots ,y_d(\alpha_d)\right)\in \mathbb{F}^d: \bm{\alpha}=(\alpha_1, \ldots, \alpha_d)
\in \mathcal {A}\right\}.
\end{equation}
$\Xi$ is also called an $\mathcal{A}$-\emph{Cartesian} set.
\end{definition}
For instance, point set
$\Xi=\{(2.3, 1.2), (4.7, 1.2), (1.5, 1.2), (2.3, 0.2)\}\subset \mathbb{Q}^2$ is a $\{(0,0), (1,0), (2,0), (0,1)\}$-Cartesian set.

\begin{theorem}\textup{\cite{Sau2004}}\label{Sauer}
Let $\Xi\subset \mathbb{F}^d$ be an $\mathcal{A}$-Cartesian set, then
Gr\"{o}bner \'{e}scalier $\mathcal{N}_{\prec}(\mathcal {I}(\Xi))$
w.r.t. any monomial order $\prec$ is identical to
\begin{equation}\label{carmonbas1}
\mathfrak{N}:=\{{\bm{x}}^{\bm{\alpha}}:\bm{\alpha}\in
\mathcal {A}\}.
\end{equation}
\end{theorem}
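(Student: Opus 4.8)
The plan is to exhibit, for each exponent outside $\mathcal{A}$, an explicit element of $\mathcal{I}(\Xi)$ whose leading monomial is the same for \emph{every} monomial order, and then to close the argument by a dimension count. Concretely, for $\bm{\gamma}=(\gamma_1,\dots,\gamma_d)\in\mathbb{N}_0^d$ I would set
$$
p_{\bm{\gamma}}(\bm{x})=\prod_{i=1}^{d}\prod_{j=0}^{\gamma_i-1}\bigl(x_i-y_i(j)\bigr),
$$
the tensor-product Newton polynomial attached to $\bm{\gamma}$, with the convention that an empty inner product equals $1$ (so that the variable $x_i$ is absent when $\gamma_i=0$). Expanding the $i$-th factor as $x_i^{\gamma_i}+(\text{lower powers of }x_i)$ shows $p_{\bm{\gamma}}=\bm{x}^{\bm{\gamma}}+\sum_{\bm{\beta}}c_{\bm{\beta}}\bm{x}^{\bm{\beta}}$, where every occurring $\bm{\beta}\neq\bm{\gamma}$ satisfies $\bm{x}^{\bm{\beta}}\mid\bm{x}^{\bm{\gamma}}$. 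Since any monomial order refines divisibility, this forces $\mathrm{LM}_{\prec}(p_{\bm{\gamma}})=\bm{x}^{\bm{\gamma}}$ for all $\prec$ simultaneously, which is precisely the order-independence we need.

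The heart of the argument is to determine when $p_{\bm{\gamma}}$ lies in $\mathcal{I}(\Xi)$. Writing the point indexed by $\bm{\beta}\in\mathcal{A}$ as $\bm{\xi}_{\bm{\beta}}=(y_1(\beta_1),\dots,y_d(\beta_d))$, I would compute $p_{\bm{\gamma}}(\bm{\xi}_{\bm{\beta}})=\prod_{i}\prod_{j<\gamma_i}(y_i(\beta_i)-y_i(j))$ and use the injectivity of each $y_i$ to conclude that this value vanishes exactly when $\beta_i<\gamma_i$ for some $i$, i.e. exactly when $\bm{\beta}\not\geq\bm{\gamma}$. Hence $p_{\bm{\gamma}}$ vanishes on all of $\Xi$ if and only if no $\bm{\beta}\in\mathcal{A}$ satisfies $\bm{\beta}\geq\bm{\gamma}$. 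Because $\mathcal{A}$ is lower, the existence of such a $\bm{\beta}$ is equivalent to $\bm{\gamma}\in\mathcal{A}$, yielding the clean dichotomy $p_{\bm{\gamma}}\in\mathcal{I}(\Xi)\iff\bm{\gamma}\notin\mathcal{A}$. This equivalence, which fuses the injectivity of the $y_i$ with the lower-set hypothesis, is the step I expect to carry the whole proof; everything else is bookkeeping.

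From here the conclusion is immediate and uniform in $\prec$. For every $\bm{\gamma}\notin\mathcal{A}$ the element $p_{\bm{\gamma}}\in\mathcal{I}(\Xi)$ has $\mathrm{LM}_{\prec}(p_{\bm{\gamma}})=\bm{x}^{\bm{\gamma}}$, so $\bm{x}^{\bm{\gamma}}\in\mathrm{LM}_{\prec}(\mathcal{I}(\Xi))$; equivalently $\mathbb{T}(\bm{x})\setminus\mathfrak{N}\subseteq\mathrm{LM}_{\prec}(\mathcal{I}(\Xi))$, which gives the inclusion $\mathcal{N}_{\prec}(\mathcal{I}(\Xi))\subseteq\mathfrak{N}$. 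To upgrade this to equality I would invoke a cardinality count: since a Gr\"obner \'escalier is a monomial order quotient basis, $|\mathcal{N}_{\prec}(\mathcal{I}(\Xi))|=\dim_{\mathbb{F}}\mathbb{F}[\bm{x}]/\mathcal{I}(\Xi)=|\Xi|$, and the injectivity of the $y_i$ makes $\bm{\alpha}\mapsto\bm{\xi}_{\bm{\alpha}}$ a bijection from $\mathcal{A}$ onto $\Xi$, so $|\Xi|=|\mathcal{A}|=|\mathfrak{N}|$. A subset of the finite set $\mathfrak{N}$ of equal cardinality must be all of $\mathfrak{N}$, whence $\mathcal{N}_{\prec}(\mathcal{I}(\Xi))=\mathfrak{N}$ for every $\prec$. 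The only routine points needing care are the empty-product convention for $\gamma_i=0$ and the identity $\dim_{\mathbb{F}}\mathbb{F}[\bm{x}]/\mathcal{I}(\Xi)=|\Xi|$, which holds because the points are distinct and $\mathbb{F}$-rational, so evaluation surjects onto $\mathbb{F}^{|\Xi|}$ with kernel $\mathcal{I}(\Xi)$.
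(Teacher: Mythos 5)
The paper does not prove this statement at all: Theorem~\ref{Sauer} is quoted as a background result with a citation to Sauer's 2004 paper, so there is no internal proof to compare against. Judged on its own, your argument is correct and complete. The Newton polynomial $p_{\bm{\gamma}}=\prod_i\prod_{j<\gamma_i}(x_i-y_i(j))$ does have $\mathrm{LM}_{\prec}(p_{\bm{\gamma}})=\bm{x}^{\bm{\gamma}}$ for every monomial order, since every other term divides $\bm{x}^{\bm{\gamma}}$ and monomial orders refine divisibility; the evaluation computation together with injectivity of the $y_i$ gives $p_{\bm{\gamma}}(\bm{\xi}_{\bm{\beta}})=0\iff\bm{\beta}\not\geq\bm{\gamma}$, and the lower-set property converts ``no $\bm{\beta}\in\mathcal{A}$ with $\bm{\beta}\geq\bm{\gamma}$'' into ``$\bm{\gamma}\notin\mathcal{A}$''; and the cardinality count $|\mathcal{N}_{\prec}(\mathcal{I}(\Xi))|=|\Xi|=|\mathcal{A}|=|\mathfrak{N}|$ correctly upgrades the inclusion $\mathcal{N}_{\prec}(\mathcal{I}(\Xi))\subseteq\mathfrak{N}$ to equality. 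This is essentially the standard route (and close in spirit to Sauer's original argument via tensor-product Newton fundamental polynomials), so nothing is missing; the only stylistic remark is that your dichotomy $p_{\bm{\gamma}}\in\mathcal{I}(\Xi)\iff\bm{\gamma}\notin\mathcal{A}$ actually yields slightly more than needed, namely that the $p_{\bm{\gamma}}$ with $\bm{\gamma}\in\mathcal{C}[\mathfrak{N}]$ form a universal Gr\"{o}bner basis, which is the stronger form of Sauer's result used later in the paper.
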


\section{Main Results}\label{sec:main}

\subsection{Zero-dimensional ideals with unique monomial order quotient bases}

In this subsection we will establish two criteria for zero-dimensional ideals with unique monomial order quotient bases. First, we need the following easy lemma.

\begin{lemma}\label{gctest2l}
Suppose that the monomials in the finite set $T\subset\mathbb{T}(\bm{x})$ are linearly
independent modulo an ideal $\mathcal {I}\subset \mathbb{F}[\bm{x}]$.
For each  $1\leq i \leq m$, let $T_i$ be a subset of $T$ and let $V_i$ be the $\mathbb{F}$-vector space generated by $\{\bm{t}+\mathcal {I}:\bm{t} \in T_i\}$. Then
$\bigcap_{i=1}^m V_i$ is generated by  $\{\bm{t}+\mathcal {I}: \bm{t}\in \bigcap_{i=1}^m T_i\}$.
\end{lemma}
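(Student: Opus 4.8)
The plan is to exploit the hypothesis that the cosets $\{\bm{t}+\mathcal{I}:\bm{t}\in T\}$ are linearly independent in $\mathbb{F}[\bm{x}]/\mathcal{I}$, so that each element of the ambient space $W:=\mathrm{span}_{\mathbb{F}}\{\bm{t}+\mathcal{I}:\bm{t}\in T\}$ admits a \emph{unique} expansion in the basis indexed by $T$. Under this identification every $V_i$ is simply the coordinate subspace of $W$ supported on $T_i$, and the claim reduces to the elementary fact that an intersection of coordinate subspaces is the coordinate subspace on the intersection of the index sets. I would make this support-based argument the backbone of the proof.

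First I would dispose of the easy inclusion. Since $\bigcap_{i=1}^m T_i\subseteq T_j$ for every $j$, each generator $\bm{t}+\mathcal{I}$ with $\bm{t}\in\bigcap_{i=1}^m T_i$ belongs to every $V_j$, and hence to $\bigcap_{i=1}^m V_i$; therefore $\mathrm{span}_{\mathbb{F}}\{\bm{t}+\mathcal{I}:\bm{t}\in\bigcap_{i=1}^m T_i\}\subseteq\bigcap_{i=1}^m V_i$. This direction uses nothing beyond the definitions.

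For the reverse inclusion I would take an arbitrary $v\in\bigcap_{i=1}^m V_i$ and write it in the basis as $v=\sum_{\bm{t}\in T}c_{\bm{t}}(\bm{t}+\mathcal{I})$ with $c_{\bm{t}}\in\mathbb{F}$, where this expansion is unique precisely by the linear independence hypothesis. Fixing an index $i$, the membership $v\in V_i$ means $v$ is expressible using only the generators indexed by $T_i$; comparing with the unique basis expansion forces $c_{\bm{t}}=0$ for all $\bm{t}\in T\setminus T_i$. Letting $i$ range over $1,\ldots,m$ shows that the support $\{\bm{t}:c_{\bm{t}}\neq 0\}$ lies in each $T_i$, hence in $\bigcap_{i=1}^m T_i$. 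Consequently $v=\sum_{\bm{t}\in\bigcap_{i=1}^m T_i}c_{\bm{t}}(\bm{t}+\mathcal{I})$ belongs to $\mathrm{span}_{\mathbb{F}}\{\bm{t}+\mathcal{I}:\bm{t}\in\bigcap_{i=1}^m T_i\}$, yielding the equality.

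As the statement is billed as an easy lemma, there is no genuine obstacle; the single point requiring care is the invocation of linear independence to guarantee uniqueness of the coordinate representation. Without it the coefficients could be redistributed among different generators and the support-comparison step would collapse. Everything else is routine bookkeeping of supports, and no property of $\mathcal{I}$ beyond the given independence of $T$ modulo $\mathcal{I}$ is needed.
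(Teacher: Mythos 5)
Your proof is correct and rests on the same key fact as the paper's: linear independence of $T$ modulo $\mathcal{I}$ makes the coefficient expansion of any element of $\bigcap_i V_i$ unique, which forces its support into $\bigcap_i T_i$. The only difference is organizational --- the paper proves the two-set case by comparing two expansions and then inducts on $m$, whereas you run the support argument for all $m$ indices simultaneously, which is slightly cleaner but not a genuinely different route.
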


\begin{proof} We will use induction on $m$, the number of the monomial sets, to prove the lemma.
When $m=2$, for an arbitrary $\bm{u}+\mathcal{I}\in V_1\bigcap V_2$, there exist $k_{\bm{t}}, l_{\bm{t}}, k_{\bm{t}'}, l_{\bm{t}''}\in \mathbb{F}$ such that
$$
\left(\sum_{\bm{t}\in  T_1\cap T_2} k_{\bm{t}} \bm{t}+\sum_{\bm{t}'\in T_1\setminus T_2}k_{\bm{t}'}\bm{t}'\right) ~\mbox{mod}~\mathcal {I}\equiv
\bm{u}\equiv \left(\sum_{\bm{t}\in  T_1\cap T_2} l_{\bm{t}} \bm{t}+\sum_{\bm{t}''\in T_2\setminus T_1}l_{\bm{t}''} \bm{t}''\right)  ~\mbox{mod}~\mathcal {I}
$$
which implies that
$$\left(\sum_{\bm{t}\in  T_1\cap T_2} (k_{\bm{t}}-l_{\bm{t}})\bm{t}+\sum_{\bm{t}'\in T_1\setminus T_2}k_{\bm{t}'}\bm{t}'-\sum_{\bm{t}''\in T_2\setminus T_1}l_{\bm{t}''} \bm{t}''\right)\equiv 0 ~\mbox{mod}~\mathcal {I}.$$
Consequently, we have
$$\bm{u}\equiv \sum_{\bm{t}\in  T_1\cap T_2} k_{\bm{t}} \bm{t}~\mbox{mod}~\mathcal {I}.$$
Now, assume that the lemma is true for $m-1$, i.e.,
$\bigcap_{i=1}^{m-1} V_i$ is generated by  $\{\bm{t}+\mathcal {I}: \bm{t}\in \bigcap_{i=1}^{m-1} T_i\}$.
Since $V_m$ is generated by  $\{\bm{t}+\mathcal {I}: \bm{t}\in T_m\}$ and $\bigcap_{i=1}^{m-1} T_i\subset T$, the statement for $m=2$ implies the lemma.
\end{proof}

The following proposition presents the first criterion for zero-dimensional ideals
with unique monomial order quotient bases whose proof uses the notion of elimination order. Actually,
a monomial order on
$\mathbb{T}(\bm{x})$ is called an \emph{elimination order for} $x_i$ if the
monomial $x_i$ is greater than all monomials in
$$
\mathbb{T}(\bm{x}\setminus
x_i):=\mathbb{T}(x_1, \ldots, x_{i-1}, x_{i+1}, \ldots, x_d)
$$
w.r.t. this order. A typical example is the lexicographic order $\succ_{lex(i)}$ with
$$
x_i\succ_{lex(i)}\cdots \succ_{lex(i)} x_d \succ_{lex(i)} x_1 \succ_{lex(i)} \cdots \succ_{lex(i)}x_{i-1}.
$$

\begin{proposition}\label{gctest11}
Let $\mathcal {I}\subset \mathbb{F}[\bm{x}]$ be a zero-dimensional ideal and let $O$ be a monomial order quotient basis for $\mathcal {I}$.
Then this monomial order quotient basis is unique
if and only if for each ${\bm{x}}^{\bm{\alpha}} \in
\mathcal {C}[O]$, the set $\{\bm{x}^{\bm{\beta}}: 0\leq \bm{\beta}\leq\bm{\alpha}\}$ is linearly dependent modulo $\mathcal {I}$.
\end{proposition}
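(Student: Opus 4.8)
The plan is to prove both implications by contraposition, reading the statement as an equivalence between ``$O$ is not the unique monomial order quotient basis'' and ``some corner $\bm x^{\bm\alpha}\in\mathcal C[O]$ has a box $\{\bm x^{\bm\beta}:\bm 0\le\bm\beta\le\bm\alpha\}$ that is linearly \emph{independent} modulo $\mathcal I$''. Throughout I write $N=\dim_{\mathbb F}\mathbb F[\bm x]/\mathcal I$ and use that every monomial order quotient basis, being an order ideal whose residues form an $\mathbb F$-basis of $\mathbb F[\bm x]/\mathcal I$, has exactly $N$ elements.

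First I would settle the implication ``all boxes dependent $\Rightarrow$ $O$ unique'' by its contrapositive, which needs no monomial order at all. Suppose $O'$ is a second monomial order quotient basis with $O'\neq O$. Since $|O'|=|O|=N$, the set $O'\setminus O$ is nonempty; choose $\bm x^{\bm\alpha}$ minimal in it with respect to the product order $\le$. Because $O'$ is an order ideal, all proper divisors of $\bm x^{\bm\alpha}$ lie in $O'$, and by minimality none of them lies in $O'\setminus O$, so they all lie in $O$; together with $\bm x^{\bm\alpha}\notin O$ this shows $\bm x^{\bm\alpha}\in\mathcal C[O]$. Moreover the entire box $\{\bm x^{\bm\beta}:\bm 0\le\bm\beta\le\bm\alpha\}$ is contained in the basis $O'$ and is therefore linearly independent modulo $\mathcal I$. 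This produces the required corner with an independent box.

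The reverse implication ``$O$ unique $\Rightarrow$ all boxes dependent'' is again handled contrapositively, and this is where elimination orders enter. Assuming some corner $\bm x^{\bm\alpha}\in\mathcal C[O]$ has a linearly independent box, I must manufacture a monomial order quotient basis different from $O$. I would reduce this to the following membership fact: the box $\{\bm x^{\bm\beta}:\bm\beta\le\bm\alpha\}$ is linearly independent modulo $\mathcal I$ if and only if $\bm x^{\bm\alpha}$ belongs to the Gr\"obner \'escalier $\mathcal N_\prec(\mathcal I)$ for some monomial order $\prec$. The ``only if'' is the substantive half; the converse is immediate, since any \'escalier is an order ideal and therefore contains, together with $\bm x^{\bm\alpha}$, all of its divisors, i.e. the whole box. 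Once such a $\prec$ is found, $\mathcal N_\prec(\mathcal I)$ is a monomial order quotient basis containing $\bm x^{\bm\alpha}\notin O$, hence is distinct from $O$, contradicting uniqueness.

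The main obstacle is the construction of this order, and it is worth recording why a naive single exchange fails: writing $\bm x^{\bm\alpha}\equiv\sum_{\bm t\in O}c_{\bm t}\,\bm t\pmod{\mathcal I}$, independence of the box forces some $\bm x^{\bm\gamma}\in O$ with $c_{\bm x^{\bm\gamma}}\neq 0$ and $\bm\gamma\not\le\bm\alpha$, yet $(O\setminus\{\bm x^{\bm\gamma}\})\cup\{\bm x^{\bm\alpha}\}$ is an order ideal only when $\bm x^{\bm\gamma}$ is maximal in $O$, and the (unique) representation need not offer a maximal such $\bm x^{\bm\gamma}$. This is precisely what the elimination order is meant to repair: choosing a coordinate $i$ with $\alpha_i\ge 1$ (possible since $\bm x^{\bm\alpha}\neq 1$) and an elimination order that pushes the variables outside $\mathrm{supp}(\bm\alpha)$ above those inside it, one drives the \'escalier toward monomials supported on $\mathrm{supp}(\bm\alpha)$, and independence of the box is exactly the statement guaranteeing $\bm x^{\bm\alpha}\notin\langle\,\bm x^{\bm\mu}:\bm x^{\bm\mu}\prec\bm x^{\bm\alpha}\,\rangle$ modulo $\mathcal I$, i.e. $\bm x^{\bm\alpha}\notin\mathrm{LM}_\prec(\mathcal I)$. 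Verifying that this engineered order keeps $\bm x^{\bm\alpha}$ off the leading-monomial ideal is the crux, and is the step I expect to demand the most care.
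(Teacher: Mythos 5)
Your backward direction (``all corner boxes dependent $\Rightarrow$ $O$ unique'') is correct, and in fact slightly cleaner than the paper's: by choosing $\bm{x}^{\bm{\alpha}}$ minimal in $O'\setminus O$ with respect to the product order you land directly in $\mathcal{C}[O]$ with the whole box inside $O'$, whereas the paper needs a two-case analysis (corner of $O$ versus proper multiple of a corner). No complaint there.

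The forward direction, however, has a genuine gap: the ``membership fact'' you propose to reduce it to is false, and the paper itself contains the counterexample. For $\Xi=\{(0,0),(1.1,-0.1),(0.1,0.9),(1,1)\}\subset\mathbb{Q}^2$, the order ideal $O=\{1,x_1,x_2,x_1^2\}$ is a monomial order quotient basis, $x_1x_2\in\mathcal{C}[O]$, and its box $\{1,x_1,x_2,x_1x_2\}$ is linearly independent modulo $\mathcal{I}(\Xi)$; yet the paper shows that every Gr\"obner \'escalier of $\mathcal{I}(\Xi)$ must contain $x_1^2$ or $x_2^2$ and has only four elements, so $x_1x_2$ lies in $\mathcal{N}_{\prec}(\mathcal{I}(\Xi))$ for no monomial order $\prec$. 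Thus independence of a corner's box does not place that corner in any \'escalier, and your contrapositive cannot be closed by manufacturing a basis through the violating corner. The point of failure is exactly the sentence you flag as the crux: $\bm{x}^{\bm{\alpha}}\in\mathrm{LM}_{\prec}(\mathcal{I})$ means $\bm{x}^{\bm{\alpha}}$ is congruent modulo $\mathcal{I}$ to a combination of \emph{all} $\prec$-smaller monomials, a far larger set than the box, so no choice of elimination-type order is controlled by box-independence alone. The paper sidesteps this by proving the forward direction directly rather than contrapositively: uniqueness forces $O=\mathcal{N}_{\prec_1}(\mathcal{I})=\cdots=\mathcal{N}_{\prec_d}(\mathcal{I})$ for elimination orders $\prec_i$ for $x_i$; hence each corner $\bm{x}^{\bm{\alpha}}$ lies modulo $\mathcal{I}$ in the span of $O_i=\{\bm{x}^{\bm{\beta}}\in O:\bm{x}^{\bm{\beta}}\prec_i\bm{x}^{\bm{\alpha}}\}$ for every $i$; since $\bm{x}^{\bm{\beta}}\prec_i\bm{x}^{\bm{\alpha}}$ forces $\beta_i\le\alpha_i$ under an elimination order for $x_i$, the intersection $\bigcap_{i}O_i$ is contained in $\{\bm{x}^{\bm{\beta}}:\bm{0}\le\bm{\beta}<\bm{\alpha}\}$, and Lemma \ref{gctest2l} converts the intersection of the spans into the span of the intersection, yielding the required dependence. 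You would need to import this (or an equivalent) argument to complete your proof.
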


\begin{proof}
$\Longrightarrow$:
For each $1\leq i\leq d$, let $\prec_i$ be an elimination order for $x_i$. Then by the uniqueness of the monomial order quotient basis for $\mathcal {I}$, we have
\begin{equation}\label{O}
\mathcal{N}_{\prec_1}(\mathcal {I})=\cdots=\mathcal{N}_{\prec_d}(\mathcal {I})=O.
\end{equation}
Next, we will show that for each ${\bm{x}}^{\bm{\alpha}} \in \mathcal {C}[O]$,
$\{\bm{x}^{\bm{\beta}}: 0\leq \bm{\beta}\leq\bm{\alpha}\}$ is linearly dependent modulo $\mathcal {I}$.

An arbitrary ${\bm{x}}^{\bm{\alpha}}$ in $\mathcal{C}[O]$ gives rise to monomial sets
$$O_i:=\{{\bm{x}}^{\bm{\beta}}\in O:  {\bm{x}}^{\bm{\beta}} \prec_i {\bm{x}}^{\bm{\alpha}}\}, \quad 1\leq i\leq d.$$
For each $1\leq i\leq d$,
let $V_i$ be the $\mathbb{F}$-vector space generated by $\{\bm{t}+\mathcal {I}: \bm{t}\in O_i\}$. It follows from the definition of the elimination order for $x_i$ and (\ref{corner}) that
\begin{equation}\label{intersect}
\bigcap_{i=1}^d O_i \subseteq \{{\bm{x}}^{\bm{\beta}}: 0\leq \bm{\beta}<\bm{\alpha}\}\subseteq O.
\end{equation}
For each $1\leq i\leq d$, ${\bm{x}}^{\bm{\alpha}}\in
\mathcal{C}[O]=\mathcal{C}[\mathcal{N}_{\prec_{i}}(\mathcal {I})]$ implies that the set
$ O_i\bigcup\{{\bm{x}}^{\bm{\alpha}}\}$ is linearly dependent modulo $\mathcal {I}$. That is to say,
${\bm{x}}^{\bm{\alpha}}+\mathcal{I}\in V_i, 1\leq i\leq d$. Therefore, ${\bm{x}}^{\bm{\alpha}}+\mathcal{I}\in\bigcap_{i=1}^n V_i$.
Recalling Lemma \ref{gctest2l}, we know that
$\bigcap_{i=1}^n V_i$ is generated by $\{\bm{t}+\mathcal {I}: \bm{t}\in\bigcap_{i=1}^n O_i\}$. Hence,
$(\bigcap_{i=1}^n O_i)\bigcup \{{\bm{x}}^{\bm{\alpha}}\}$ is linearly dependent modulo $\mathcal {I}$.
According to (\ref{intersect}), $\{{\bm{x}}^{\bm{\beta}}: 0\leq \bm{\beta}\leq\bm{\alpha}\}$
is linearly dependent module  $\mathcal {I}$.

$\Longleftarrow$: Suppose that there exists another monomial order quotient basis
$O'\neq O$ for $\mathcal {I}$. Let ${\bm{x}}^{\alpha}$ be an arbitrary
element in $O'\setminus O$. We now have two cases to
consider.

Case I: ${\bm{x}}^{\bm{\alpha}} \in  O'\cap\mathcal{C}[O]$.

In this case, $\left\{\bm{x}^{\bm{\beta}}:0\leq \bm{\beta}\leq \bm{\alpha}\right\}$ is linearly dependent modulo $\mathcal {I}$.
${\bm{x}}^{\bm{\alpha}}\in O'$ leads to $\left\{\bm{x}^{\bm{\beta}}:0\leq \bm{\beta}\leq \bm{\alpha}\right\}\subseteq O'$ since $O'$ is an order ideal, which implies that
$O'$ is linearly dependent modulo $\mathcal {I}$. This is a
contradiction to our hypothesis that $O'$ is a monomial order quotient basis for $\mathcal {I}$. Hence, ${\bm{x}}^{\bm{\alpha}} \notin
\mathcal{C}[O]$.

Case II: ${\bm{x}}^{\bm{\alpha}} \in O'\setminus \mathcal{C}[O]$.

In this case, ${\bm{x}}^{\bm{\alpha}}\notin O \cup \mathcal{C}[O]$.  It follows from the definition of $\mathcal{C}[O]$ that there exists ${\bm{x}}^{\bm{\gamma}}\in \mathcal{C}[O]$ such that
${\bm{x}}^{\bm{\gamma}}|{\bm{x}}^{\bm{\alpha}}$, which implies that ${\bm{x}}^{\bm{\gamma}}\in
O'$, and hence $\{\bm{x}^{\bm{\beta}}:0\leq \bm{\beta}\leq \bm{\gamma}\}\subseteq O'$. By our assumption,
$\{\bm{x}^{\bm{\beta}}:0\leq \bm{\beta}\leq \bm{\gamma}\}$ is linearly dependent modulo $\mathcal {I}$, and
consequently $O'$ is linearly dependent modulo $\mathcal {I}$ which leads to a contradiction too. Therefore, we conclude that ${\bm{x}}^{\bm{\alpha}}\notin O'\setminus \mathcal{C}[O]$.

In sum, we can deduce that ${\bm{x}}^{\bm{\alpha}}\notin
O'$, which contradicts the hypothesis ${\bm{x}}^{\bm{\alpha}}\in O'\setminus O$.
\end{proof}

As the main theorem of this paper,  Theorem \ref{gctest2} characterizes a new criterion for zero-dimensional ideals with unique monomial order quotient bases by elimination orders.

\begin{theorem}\label{gctest2}
For each $1\leq i\leq d$, let $\prec_i$ be an elimination order for $x_i$. Then for each zero-dimensional ideal $\mathcal {I}$,
the following are equivalent:
\begin{enumerate}
  \item[\textup{(1)}] $\mathcal {I}$ has a unique monomial order quotient basis.
  \item[\textup{(2)}] For any two monomial orders $\prec$ and $\prec'$, $\mathcal{N}_{\prec}(\mathcal {I})=\mathcal{N}_{\prec'}(\mathcal {I})$ .
  \item[\textup{(3)}] $\mathcal{N}_{\prec_i}(\mathcal {I}), 1\leq i\leq d$, are identical.
\end{enumerate}
\end{theorem}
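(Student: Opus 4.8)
The plan is to prove the cycle of implications $(1)\Rightarrow(2)\Rightarrow(3)\Rightarrow(1)$, which is the most economical route to the three-fold equivalence. Two of these links are essentially immediate, and the entire weight of the theorem will rest on the last one, $(3)\Rightarrow(1)$, which I expect to be the main obstacle.

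For $(1)\Rightarrow(2)$ I would invoke the fact recorded in Section~\ref{sec:pre} that every Gr\"{o}bner \'{e}scalier $\mathcal{N}_{\prec}(\mathcal{I})$ is a monomial order quotient basis for $\mathcal{I}$. Thus the family $\{\mathcal{N}_{\prec}(\mathcal{I}):\prec\text{ a monomial order}\}$ is contained in the set of all monomial order quotient bases; if the latter has the single element $O$, then $\mathcal{N}_{\prec}(\mathcal{I})=O$ for every $\prec$, and in particular $\mathcal{N}_{\prec}(\mathcal{I})=\mathcal{N}_{\prec'}(\mathcal{I})$ for any two monomial orders. The implication $(2)\Rightarrow(3)$ is then a pure specialization: each elimination order $\prec_i$ is one particular monomial order, so applying the equalities of (2) to the pairs $(\prec_i,\prec_j)$ yields (3) at once.

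The crux is $(3)\Rightarrow(1)$, and here I would lean on Proposition~\ref{gctest11}. Write $O:=\mathcal{N}_{\prec_1}(\mathcal{I})=\cdots=\mathcal{N}_{\prec_d}(\mathcal{I})$ for the common value guaranteed by (3); since $O$ is a Gr\"{o}bner \'{e}scalier it is a monomial order quotient basis, so Proposition~\ref{gctest11} applies to it, and to conclude uniqueness I need only verify its defining condition: for each $\mathbf{x}^{\bm{\alpha}}\in\mathcal{C}[O]$, the set $\{\mathbf{x}^{\bm{\beta}}:0\le\bm{\beta}\le\bm{\alpha}\}$ is linearly dependent modulo $\mathcal{I}$. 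The key observation is that the hypothesis (3) is precisely the content of equation~(\ref{O}) in the forward direction of the proof of Proposition~\ref{gctest11}, which was the only consequence of uniqueness actually used there. I would therefore replay that argument verbatim: form $O_i:=\{\mathbf{x}^{\bm{\beta}}\in O:\mathbf{x}^{\bm{\beta}}\prec_i\mathbf{x}^{\bm{\alpha}}\}$ and the spaces $V_i$ they generate, use the elimination property together with (\ref{corner}) to get $\bigcap_i O_i\subseteq\{\mathbf{x}^{\bm{\beta}}:0\le\bm{\beta}<\bm{\alpha}\}$, note that $\mathbf{x}^{\bm{\alpha}}\in\mathcal{C}[\mathcal{N}_{\prec_i}(\mathcal{I})]$ forces $\mathbf{x}^{\bm{\alpha}}+\mathcal{I}\in V_i$ for all $i$, and then apply Lemma~\ref{gctest2l} to intersect the $V_i$ and deduce the required dependence. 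Once this condition is checked, the sufficiency direction of Proposition~\ref{gctest11} yields that $O$ is the unique monomial order quotient basis, which is exactly (1).

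The main difficulty, as noted, is recognizing that $(3)\Rightarrow(1)$ does not require the full force of uniqueness but only the coincidence of the $d$ \'{e}scaliers attached to elimination orders, precisely because the forward direction of Proposition~\ref{gctest11} extracted nothing stronger than (\ref{O}) from its hypothesis. The technical heart is the intersection step via Lemma~\ref{gctest2l}, which converts the membership $\mathbf{x}^{\bm{\alpha}}+\mathcal{I}\in\bigcap_i V_i$ into a genuine linear dependence supported on $\bigcap_i O_i\subseteq\{\mathbf{x}^{\bm{\beta}}:0\le\bm{\beta}<\bm{\alpha}\}$, so that adjoining $\mathbf{x}^{\bm{\alpha}}$ produces the dependence over the full order ideal below $\bm{\alpha}$.
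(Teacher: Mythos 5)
Your proposal is correct and follows essentially the same route as the paper: the cycle $(1)\Rightarrow(2)\Rightarrow(3)\Rightarrow(1)$, with the first two links immediate and the last obtained by observing that the forward argument of Proposition~\ref{gctest11} uses only the coincidence of the elimination-order \'{e}scaliers (equation~(\ref{O})) to establish the linear-dependence criterion, after which the sufficiency direction of that proposition gives uniqueness.
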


\begin{proof}
(1)$\Rightarrow$(2): Since $\mathcal {I}$ has a unique monomial order quotient basis,
$\mathcal{N}_{\prec}(\mathcal {I})$ and $\mathcal{N}_{\prec'}(\mathcal {I})$ obviously coincide for any two monomial orders $\prec$ and $\prec'$.

(2)$\Rightarrow$(3) is trivial.

(3)$\Rightarrow$(1): Set
$$
O:=\mathcal{N}_{\prec_1}(\mathcal {I})=\cdots=\mathcal{N}_{\prec_d}(\mathcal {I}).
$$
The arguments in the proof of Proposition \ref{gctest11} implies that for each ${\bm{x}}^{\bm{\alpha}}\in\mathcal{C}[O]$,
$\{{\bm{x}}^{\bm{\beta}}: 0\leq \bm{\beta}\leq\bm{\alpha}\}$
is linearly dependent module  $\mathcal {I}$. Therefore,  $O$ is the unique monomial order quotient basis for $\mathcal {I}$.
\end{proof}

\begin{lemma}
Let $\mathcal {I}\subset \mathbb{F}[\bm{x}]$ be a zero-dimensional ideal,
and let $G$ and $G'$ be the reduced Gr\"{o}bner bases for $\mathcal{I}$ w.r.t.
monomial orders $\prec$ and $\prec'$ respectively.
Then $\mathcal{N}_{\prec}(\mathcal {I})=\mathcal{N}_{\prec'}(\mathcal {I})$ if and only if
$\mathrm{LM}_{\prec}(G)=\mathrm{LM}_{\prec'}(G')$. Furthermore, if $\mathrm{LM}_{\prec}(G)=\mathrm{LM}_{\prec'}(G')$, then
$G=G'$.
\end{lemma}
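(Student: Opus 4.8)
The plan is to reduce everything to the standard identification of the leading monomials of a reduced Gr\"{o}bner basis with the corner of the corresponding Gr\"{o}bner \'{e}scalier. For $G$ I would record that $\mathrm{LM}_{\prec}(G)$ is exactly the set of minimal generators (under divisibility) of the monomial ideal $\mathrm{LM}_{\prec}(\mathcal{I})=\mathbb{T}(\bm{x})\setminus\mathcal{N}_{\prec}(\mathcal{I})$, which by (\ref{corner}) is precisely $\mathcal{C}[\mathcal{N}_{\prec}(\mathcal{I})]$; likewise $\mathrm{LM}_{\prec'}(G')=\mathcal{C}[\mathcal{N}_{\prec'}(\mathcal{I})]$. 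This step is routine but is the hinge for the whole argument, so I would make it explicit first.

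For the equivalence I would observe that an order ideal and its corner determine one another: $\mathcal{N}_{\prec}(\mathcal{I})$ consists of exactly those monomials divisible by no element of its corner, and conversely the corner is read off from $\mathcal{N}_{\prec}(\mathcal{I})$ through (\ref{corner}). Hence $\mathcal{N}_{\prec}(\mathcal{I})=\mathcal{N}_{\prec'}(\mathcal{I})$ if and only if $\mathcal{C}[\mathcal{N}_{\prec}(\mathcal{I})]=\mathcal{C}[\mathcal{N}_{\prec'}(\mathcal{I})]$, and by the identification of the previous paragraph this last equality is just $\mathrm{LM}_{\prec}(G)=\mathrm{LM}_{\prec'}(G')$. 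This disposes of the ``if and only if.''

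The substantive part is the ``furthermore,'' which I expect to be the main obstacle. Assume $\mathrm{LM}_{\prec}(G)=\mathrm{LM}_{\prec'}(G')$; by the equivalence already proved the two \'{e}scaliers agree, so set $O:=\mathcal{N}_{\prec}(\mathcal{I})=\mathcal{N}_{\prec'}(\mathcal{I})$, a set that is linearly independent modulo $\mathcal{I}$ since it is a monomial order quotient basis. I would then fix a common leading monomial $\bm{x}^{\bm{\alpha}}$ and take the associated $g\in G$ and $g'\in G'$. The decisive point is to translate reducedness into support information: each of $g,g'$ is monic with leading monomial $\bm{x}^{\bm{\alpha}}$, and every non-leading term of a reduced basis element avoids the leading ideal, hence lies in $O$. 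Consequently $g-g'\in\mathcal{I}$ has its leading terms cancel and is supported entirely on $O$, so linear independence of $O$ modulo $\mathcal{I}$ forces $g=g'$. Ranging over all leading monomials yields $G=G'$.

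The only delicate issue is precisely that the tails of $g$ and $g'$ must be supported on the \emph{same} standard-monomial set; this is not automatic for two different orders but is exactly guaranteed by the equality of \'{e}scaliers established in the first part. Once that is in hand, the conclusion is a one-line appeal to linear independence modulo $\mathcal{I}$.
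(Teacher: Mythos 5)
Your proposal is correct and follows essentially the same route as the paper's proof: identify $\mathrm{LM}_{\prec}(G)$ with $\mathcal{C}[\mathcal{N}_{\prec}(\mathcal{I})]$, use the fact that an order ideal and its corner determine each other for the equivalence, and conclude $G=G'$ from $g-g'\in\mathrm{span}_{\mathbb{F}}O\cap\mathcal{I}=\{0\}$ via reducedness. You simply spell out in more detail the steps the paper dismisses as ``easy to see,'' which is a reasonable expansion rather than a different argument.
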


\begin{proof}
It is easy to see that $\mathcal {C}[\mathcal{N}_{\prec}(\mathcal {I})]=\mathrm{LM}_{\prec}(G)$.
Then $\mathcal{N}_{\prec}(\mathcal {I})=\mathcal{N}_{\prec'}(\mathcal {I})$ if and only if $\mathrm{LM}_{\prec}(G)=\mathrm{LM}_{\prec'}(G')$.
Moreover, assume that $\mathrm{LM}_{\prec}(G)=\mathrm{LM}_{\prec'}(G')$. Then the first statement of this lemma implies $\mathcal{N}_{\prec}(\mathcal {I})=\mathcal{N}_{\prec'}(\mathcal {I}):=O$. Therefore,
for each pair $g\in G$, $g'\in G'$ satisfying $\mathrm{LM}_{\prec}(g)=\mathrm{LM}_{\prec'}(g')$,
$g-g'\in \mathrm{span}_{\mathbb{F}}O\cap \mathcal {I}=\{0\}$ follows by the property of reduced Gr\"{o}bner bases, which leads to the lemma.
\end{proof}

The following corollary follows immediately from Theorem 3.1 and Lemma 3.2.

\begin{corollary}
Let $\mathcal {I}\subset \mathbb{F}[\bm{x}]$ be a zero-dimensional ideal and $G_i$ be the reduced Gr\"{o}bner basis for $\mathcal {I}$ w.r.t.
an elimination order $\prec_i$ for $x_i$.  If $\mathrm{LM}_{\prec_i}(G_i)$, $1\leq i\leq d$, are identical,
then $\mathcal {I}$ has a unique reduced Gr\"{o}bner basis, independent of the monomial order.
\end{corollary}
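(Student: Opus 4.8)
The plan is to chain together the preceding lemma and Theorem~\ref{gctest2}, so that the corollary reduces to a short bookkeeping argument; essentially all the mathematical content already resides in those two results.

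First I would translate the hypothesis from leading-monomial data into \'{e}scalier data. The hypothesis is that $\mathrm{LM}_{\prec_i}(G_i)$, $1\le i\le d$, all coincide. Applying the first assertion of Lemma 3.2 to each pair $(\prec_i,\prec_j)$ of the given elimination orders, the equality $\mathrm{LM}_{\prec_i}(G_i)=\mathrm{LM}_{\prec_j}(G_j)$ is equivalent to $\mathcal{N}_{\prec_i}(\mathcal{I})=\mathcal{N}_{\prec_j}(\mathcal{I})$. Hence the Gr\"{o}bner \'{e}scaliers $\mathcal{N}_{\prec_i}(\mathcal{I})$, $1\le i\le d$, are identical, which is precisely condition~(3) of Theorem~\ref{gctest2}.

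Next I would invoke Theorem~\ref{gctest2} to promote this to \emph{all} monomial orders. By the implication (3)$\Rightarrow$(2), for any two monomial orders $\prec$ and $\prec'$ we obtain $\mathcal{N}_{\prec}(\mathcal{I})=\mathcal{N}_{\prec'}(\mathcal{I})$. The point to keep in mind is that Lemma 3.2 is stated for \emph{arbitrary} monomial orders, not only the elimination orders $\prec_i$; this is what lets me convert the \'{e}scalier equality back into leading-monomial language in full generality.

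Finally I would close the loop with both parts of Lemma 3.2. Fix arbitrary monomial orders $\prec,\prec'$ with reduced Gr\"{o}bner bases $G,G'$. Since $\mathcal{N}_{\prec}(\mathcal{I})=\mathcal{N}_{\prec'}(\mathcal{I})$, the first assertion of the lemma yields $\mathrm{LM}_{\prec}(G)=\mathrm{LM}_{\prec'}(G')$, and its ``Furthermore'' clause then upgrades this to $G=G'$. As $\prec,\prec'$ were arbitrary, the reduced Gr\"{o}bner basis of $\mathcal{I}$ is independent of the monomial order, so $\mathcal{I}$ has a unique reduced Gr\"{o}bner basis. I do not expect a genuine obstacle here: the only care needed is to re-express the hypothesis (phrased through the leading monomials of the $G_i$) as the \'{e}scalier hypothesis~(3) of Theorem~\ref{gctest2}, and to use the full generality of Lemma 3.2 when passing from \'{e}scalier equality for arbitrary orders back to equality of the reduced Gr\"{o}bner bases themselves.
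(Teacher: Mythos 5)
Your proposal is correct and is exactly the argument the paper intends: the paper gives no explicit proof, stating only that the corollary ``follows immediately'' from Theorem~\ref{gctest2} and the preceding lemma, and your chain (leading-monomial equality $\Rightarrow$ \'{e}scalier equality for the $\prec_i$ $\Rightarrow$ condition (3) $\Rightarrow$ condition (2) $\Rightarrow$ equality of reduced Gr\"{o}bner bases via the ``Furthermore'' clause) is precisely that deduction spelled out.
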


Following T. Sauer \cite{Sau2004}, we refer to a reduced Gr\"{o}bner basis $G$ for an ideal $\mathcal{I}$
as a \emph{universal Gr\"{o}bner basis} if  $G$ is the unique reduced Gr\"{o}bner basis for $\mathcal{I}$, independent of the monomial order.

Suppose that $\Theta$ gives an ideal interpolation scheme. We can compute $\mathcal{N}_{\prec_i}(\ker\Theta)$, $1\leq i\leq d$, to decide whether $\ker\Theta$ has a unique monomial order quotient basis or not.
If
\begin{equation}\label{specialinterpolationfunctionals}
\Theta=\left\{\delta_{\bm{\xi}_{k}}\circ\frac{\partial^{\|\bm{\alpha}\|}}{\partial x_1^{\alpha_1}\ldots \partial x_d^{\alpha_d}}:
{\bm{\alpha}}\in \mathcal {A}_{k},  1\leq
k\leq \mu\right\},
\end{equation}
where $\bm{\xi}_{1},\ldots,\bm{\xi}_{\mu}\in \mathbb{F}^d$
are distinct and $\mathcal {A}_{1},\ldots,\mathcal
\mathcal {A}_{\mu}\subset \mathbb{N}_0^d$ are lower, then
we can use algorithms, such as the ones in \cite{LM1995, FRR2006}, to
compare $\mathcal{N}_{\prec_{lex(i)}}(\ker\Theta)$ without computing the Gr\"{o}bner bases.

\begin{example}
Let
\begin{align*}
\Theta=\Big\{&\delta_{(0,0,0)}, \delta_{(0,0,0)}\circ \frac{\partial }{\partial x_3},
\delta_{(0,0,0)}\circ \frac{\partial }{\partial x_2},  \delta_{(0,0,0)}\circ \frac{\partial }{\partial x_1}, \\
&\delta_{(0,0,0)}\circ \frac{\partial^2
 }{\partial x_1 \partial x_3},
 \delta_{(1,0,0)}, \delta_{(1,0,0)}\circ  \frac{\partial }{\partial x_3}, \delta_{(0,0,1)}\Big\}.
 \end{align*}
A direct computation shows that
$$\mathcal{N}_{\prec_{lex(i)}}(\ker\Theta)=\{1,x_1,x_1^2,x_2,x_3,x_3^2,x_1
x_3,x_1^2 x_3\},\quad i=1,2,3,$$
which concludes that $\ker\Theta$ has a unique monomial order quotient basis.
\end{example}

\subsection{Vanishing ideals of Cartesian sets}

From Theorem \ref{Sauer} we see that the vanishing ideal of a Cartesian set has a
unique Gr\"{o}bner \'{e}scalier, independent of the monomial order. However, there also exists certain associated monomial order quotient basis of some point set which can not be Gr\"{o}bner \'{e}scalier w.r.t. any monomial order.

For example, let $\Xi=\{(0, 0), (1.1, -0.1), (0.1, 0.9), (1, 1)\}$ be a point set in $\mathbb{Q}^2$. Then
$\{1, x_1, x_2, x_1 x_2\}$ forms a monomial order quotient basis but not a Gr\"{o}bner \'{e}scalier (w.r.t. any monomial order) of $\mathcal {I}(\Xi)$. Indeed, for an arbitrary
monomial order $\prec$, we have either $x_1^2\prec x_1 x_2$ or $x_2^2\prec x_1 x_2$.
Since both $\{1, x_1, x_2, x_1^2\}$ and $\{1, x_1, x_2, x_2^2\}$ are linearly independent modulo $\mathcal {I}(\Xi)$,  $x_1^2$
or $x_2^2$
must belong to $\mathcal {N}_{\prec}(\mathcal {I}(\Xi))$.

Next, we will show that Cartesian sets
have unique associated monomial order quotient bases.  To achieve this, we need to propose a criterion for multidimensional Cartesian sets, which
is the extension of Theorem 5 of \cite{CDZ2006} to general dimension.

Specifically, for fixed $1\leq i \leq d$, we cover a finite point set $\Xi\subset \mathbb{F}^d$ with exactly $m_i+1, m_i\in \mathbb{N}$, affine hyperplanes $l_{i, 0},  l_{i, 1}, \ldots,  l_{i, m_i}\subset \mathbb{F}^d$ perpendicular to the $x_i$-axis.
Moreover, set
\begin{align}\label{cover}
  \mathscr{L}_{i, j}(\Xi):=\{&(\xi_1, \ldots, \xi_{i-1}, \xi_{i+1}, \ldots,\xi_{d}):\nonumber\\
&\forall (\xi_1, \ldots, \xi_{d})\in \Xi \cap l_{i, j}\}\subset \mathbb{F}^{d-1},\quad 1\leq i\leq d, 0\leq j\leq m_i.
\end{align}
Without loss of generality, we assume that
$\#\mathscr{L}_{i,0}({\Xi})\geq\cdots\geq\#\mathscr{L}_{i, m_i}({\Xi})$. 

\begin{theorem}\label{3dCartesiantest}
A finite set of distinct points $\Xi\subset \mathbb{F}^d, d\geq 2$, is
\emph{Cartesian} if and only if for each $1\leq i\leq d$, we have
\begin{equation}\label{3dCartesianteste}
\mathscr{L}_{i,0}(\Xi)\supseteq \cdots \supseteq
\mathscr{L}_{i,m_i}(\Xi),
\end{equation}
where $m_i$ and $\mathscr{L}_{i,j}(\Xi)$ are as above.
\end{theorem}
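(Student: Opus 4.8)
The plan is to establish an explicit dictionary between the combinatorial lower-set structure of a candidate index set $\mathcal{A}$ and the geometric nesting~(\ref{3dCartesianteste}) of the projected slices, and then to read this dictionary in each direction. The guiding observation is that fixing the $i$-th coordinate and projecting out the $i$-th entry corresponds, on the index side, to fixing $\alpha_i$ and deleting it, while the lower-set condition on $\mathcal{A}$ is precisely closure under decreasing a single coordinate by one.

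For the direction $\Longrightarrow$, suppose $\Xi$ is $\mathcal{A}$-Cartesian with injective functions $y_1,\ldots,y_d$ as in~(\ref{Cartesian}), and fix $i$. Injectivity of $y_i$ identifies the distinct $i$-th coordinates of $\Xi$ with the values $y_i(k)$ for $k$ in $\{\alpha_i:\bm{\alpha}\in\mathcal{A}\}$, and since $\mathcal{A}$ is lower this index range is $\{0,1,\ldots,m_i\}$. Deleting the $i$-th coordinate from the slice $\Xi\cap\{x_i=y_i(k)\}$ is carried by the remaining injections $y_j$ (for $j\neq i$) bijectively onto the index slice $\{(\alpha_1,\ldots,\widehat{\alpha_i},\ldots,\alpha_d):\bm{\alpha}\in\mathcal{A},\ \alpha_i=k\}$. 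The lower-set property makes these index slices nested and non-increasing as $k$ grows, hence so are the $\mathscr{L}_{i,k}(\Xi)$, and~(\ref{3dCartesianteste}) follows once I argue the labeling is compatible (see below).

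For $\Longleftarrow$ I would build the Cartesian data by hand. Using the stipulated labeling $\#\mathscr{L}_{i,0}(\Xi)\geq\cdots\geq\#\mathscr{L}_{i,m_i}(\Xi)$, define $y_i(j)$ to be the common $i$-th coordinate shared by all points on $l_{i,j}$ for $0\leq j\leq m_i$, and extend each $y_i$ to an injection $\mathbb{N}_0\rightarrow\mathbb{F}$ arbitrarily. Every $\bm{\xi}\in\Xi$ then acquires a multi-index $\bm{\alpha}(\bm{\xi})$ whose $i$-th entry records which hyperplane $l_{i,\cdot}$ contains $\bm{\xi}$; injectivity of the $y_i$ forces $\bm{\xi}\mapsto\bm{\alpha}(\bm{\xi})$ to be injective, and by construction $\Xi=\{(y_1(\alpha_1),\ldots,y_d(\alpha_d)):\bm{\alpha}\in\mathcal{A}\}$ with $\mathcal{A}$ the image of this map. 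What remains is to verify that $\mathcal{A}$ is lower.

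The crux, and the step I expect to carry the real content, is showing $\mathcal{A}$ is closed under lowering one coordinate. Given $\bm{\alpha}\in\mathcal{A}$ with $\alpha_i\geq1$, realized by some $\bm{\xi}\in\Xi\cap l_{i,\alpha_i}$, deleting the $i$-th coordinate places the projection $\widehat{\bm{\xi}}$ in $\mathscr{L}_{i,\alpha_i}(\Xi)$; the nesting hypothesis yields $\mathscr{L}_{i,\alpha_i}(\Xi)\subseteq\mathscr{L}_{i,\alpha_i-1}(\Xi)$, so $\widehat{\bm{\xi}}\in\mathscr{L}_{i,\alpha_i-1}(\Xi)$. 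By the definition~(\ref{cover}) of $\mathscr{L}$, this exhibits a point of $\Xi$ lying on $l_{i,\alpha_i-1}$ and agreeing with $\bm{\xi}$ in every coordinate but the $i$-th, whose multi-index is obtained from $\bm{\alpha}$ by decreasing the $i$-th entry by one; hence that lowered index lies in $\mathcal{A}$. Iterating coordinate by coordinate gives the full lower-set property. The remaining delicate point is the labeling compatibility invoked in the forward direction: I must check that two nested slices of equal cardinality must coincide, so that the cardinality-based ``without loss of generality'' ordering is genuinely consistent with the inclusion chain and not merely with sizes; this is what legitimizes reading~(\ref{3dCartesianteste}) off the natural order in $k$.
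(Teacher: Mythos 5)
Your proof is correct and follows essentially the same route as the paper: the forward direction translates the slices $\mathscr{L}_{i,j}(\Xi)$ into index slices of the lower set $\mathcal{A}$, and the backward direction builds $\mathcal{A}$ from the hyperplane labels and verifies lowerness coordinate by coordinate via the nesting hypothesis. Your extra remark about the cardinality-based labeling being consistent with the inclusion chain (nested finite sets of equal cardinality coincide) is a small point the paper glosses over, and its verification is immediate.
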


\begin{proof}
$\Longrightarrow$: Recall (\ref{Cartesian}). A Cartesian set $\Xi\subset \mathbb{F}^d$  can be written as
$$
\Xi=\left\{\left(y_1(\alpha_1), \ldots ,y_d(\alpha_d)\right)\in \mathbb{F}^d: \bm{\alpha}=(\alpha_1, \ldots, \alpha_d)
\in \mathcal {A}\right\}
$$
with lower set $\mathcal{A}\subset \mathbb{N}_0^d$ and injective functions $y_i:\mathbb{N}_0\rightarrow \mathbb{F}, 1\leq i\leq d$.

Fix $1\leq i\leq d$. For any $0\leq \gamma_i\leq\beta_i\leq
m_i$, according to (\ref{cover}), we have
\begin{align}
    \mathscr{L}_{i,\beta_i}(\Xi)=\Big\{&\Big(y_1(\alpha_1), \ldots, y_{i-1}(\alpha_{i-1}), y_{i+1}(\alpha_{i+1}), \ldots, y_d(\alpha_d)\Big):\nonumber\\
   &(\alpha_1, \ldots,\alpha_{i-1}, \beta_i,
\alpha_{i+1},\ldots,\alpha_{d}) \in \mathcal {A}\Big\},\label{carproof}\\
    \mathscr{L}_{i,\gamma_i}(\Xi)=\Big\{&\Big(y_1(\alpha_1), \ldots, y_{i-1}(\alpha_{i-1}), y_{i+1}(\alpha_{i+1}), \ldots, y_d(\alpha_d)\Big):\nonumber\\
    &(\alpha_1, \ldots,\alpha_{i-1}, \gamma_i,
\alpha_{i+1},\ldots,\alpha_{d}) \in \mathcal {A}\Big\}\label{carproof2},
\end{align}
which imply immediately that $\mathscr{L}_{i, \beta_i}(\Xi)\subseteq \mathscr{L}_{i, \gamma_i}(\Xi)$ for each $1\leq i\leq d$, $0\leq \gamma_i\leq\beta_i\leq
m_i$.

$\Longleftarrow$: Fix a point $\bm{\xi}=(\xi_1, \ldots, \xi_d)$ in $\Xi$. Let $l_{i, j}, 1\leq i\leq d, 0\leq
j\leq m_i, $ be covering hyperplanes defining $\mathscr{L}_{i,j}(\Xi)$. Thus, there must exist $\bm{\alpha}\in \mathbb{N}_0^d$ such that
$\{\bm{\xi}\}=\bigcap_{i=1}^d l_{i, \alpha_i}$, where  $0\leq
\alpha_{i}\leq m_i$ for all $1\leq i\leq d$.
Set
$$
\mathcal{A}:=\left\{\bm{\alpha}\in \mathbb{N}_0^d: \bigcap_{i=1}^d
l_{i, \alpha_i} \in \Xi\right\}.
$$
It is easy to observe that $\Xi$ can be described as
$$\Xi=\{\left(y_1(\alpha_1), \ldots ,y_d(\alpha_d)\right): (\alpha_1, \ldots, \alpha_d)\in \mathcal{A}\},$$
where $y_i:\mathbb{N}_0\rightarrow \mathbb{F}, 1\leq i\leq d$, are certain injective functions.
Let $(\alpha_1, \ldots, \alpha_d) \in \mathcal {A}$. For any
$(\beta_1, \ldots, \beta_d)\leq (\alpha_1, \ldots, \alpha_d)$ in $\mathbb{N}_0^d$,
by (\ref{3dCartesianteste}), we have
$$ (y_1(\alpha_1), \ldots ,y_{d-1}(\alpha_{d-1}))\in \mathscr{L}_{d, \alpha_d}(\Xi)\subseteq \mathscr{L}_{d, \beta_d}(\Xi).$$
Thus,
$
(\alpha_1, \ldots, \alpha_{d-1}, \beta_d)\in \mathcal{A}.
$
Moreover,  $$(y_1(\alpha_1), \ldots,
y_{d-2}(\alpha_{d-2}),
y_{d}(\beta_{d}))\in \mathscr{L}_{d-1, \alpha_{d-1}}(\Xi)\subseteq \mathscr{L}_{d-1, \beta_{d-1}}(\Xi)$$
implies $(\alpha_1, \ldots, \alpha_{d-2}, \beta_{d-1}, \beta_d)\in \mathcal{A}$, and so on. Finally, we can deduce that
$(\beta_1, \ldots, \beta_d)\in \mathcal{A}$, which means that $\mathcal{A}$ is lower.
\end{proof}

The next lemma and corollary establish a relation between a Cartesian set $\Xi\subset \mathbb{F}^d$ and its associated point sets $\mathscr{L}_{i, j}(\Xi)\subset\mathbb{F}^{d-1}$.
 \begin{lemma}\label{miancar}
Let $\Xi \subset \mathbb{F}^d$, $d\geq 2$, be  an $\mathcal{A}$-Cartesian set, and let
$m_i$, $\mathscr{L}_{i, j}(\Xi)$ be as in \textup{(\ref{3dCartesianteste})}. For $1\leq i\leq d, 0\leq j\leq m_i$, set
\begin{align}
    \mathcal{A}_{i,j}:=\{&(\alpha_1, \ldots, \alpha_{i-1}, \alpha_{i+1}, \ldots, \alpha_d):\nonumber \\
    &(\alpha_1, \ldots, \alpha_{i-1}, j, \alpha_{i+1}, \ldots, \alpha_d)\in\mathcal
{A} \}.\label{Aij}
\end{align}
Then $\mathscr{L}_{i,j}(\Xi)$ is $(d-1)$-dimensional
$\mathcal{A}_{i,j}$-Cartesian for all $1\leq i\leq d, 0\leq j\leq m_i$.
\end{lemma}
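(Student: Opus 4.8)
The plan is to show directly that $\mathscr{L}_{i,j}(\Xi)$ satisfies Definition \ref{Cartesiandefinition} in dimension $d-1$, taking $\mathcal{A}_{i,j}$ as its lower set and the surviving coordinate functions as its injective maps. First I would fix $1\le i\le d$ and $0\le j\le m_i$ and read off from the computation in the proof of Theorem \ref{3dCartesiantest}, specifically from (\ref{carproof}), that the covering hyperplane $l_{i,j}$ is exactly $\{x_i=y_i(j)\}$, so that
$$
\mathscr{L}_{i,j}(\Xi)=\bigl\{\bigl(y_1(\alpha_1),\ldots,y_{i-1}(\alpha_{i-1}),y_{i+1}(\alpha_{i+1}),\ldots,y_d(\alpha_d)\bigr):(\alpha_1,\ldots,\alpha_{i-1},j,\alpha_{i+1},\ldots,\alpha_d)\in\mathcal{A}\bigr\}.
$$
By the definition (\ref{Aij}) of $\mathcal{A}_{i,j}$, the tuples $(\alpha_1,\ldots,\alpha_{i-1},\alpha_{i+1},\ldots,\alpha_d)$ indexing this set are precisely the elements of $\mathcal{A}_{i,j}$, so $\mathscr{L}_{i,j}(\Xi)$ is the image of $\mathcal{A}_{i,j}$ under the $d-1$ coordinate maps obtained from the original $y_k$ by discarding $y_i$. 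After relabelling the surviving coordinates as $1,\ldots,d-1$, these maps are injective functions $\mathbb{N}_0\to\mathbb{F}$ because each $y_k$ is.

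The only substantive point is to verify that $\mathcal{A}_{i,j}\subset\mathbb{N}_0^{d-1}$ is lower. I would argue as follows: take $\bm{\alpha}'=(\alpha_1,\ldots,\alpha_{i-1},\alpha_{i+1},\ldots,\alpha_d)\in\mathcal{A}_{i,j}$ and any $\bm{\beta}'\le\bm{\alpha}'$ in $\mathbb{N}_0^{d-1}$. Inserting $j$ into the $i$-th slot of each yields two $d$-tuples $(\alpha_1,\ldots,j,\ldots,\alpha_d)$ and $(\beta_1,\ldots,j,\ldots,\beta_d)$ that agree in coordinate $i$ and satisfy $\beta_k\le\alpha_k$ in every other coordinate; hence the tuple built from $\bm{\beta}'$ is $\le$ the tuple built from $\bm{\alpha}'$ in the product order on $\mathbb{N}_0^d$. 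Since the latter lies in $\mathcal{A}$ (as $\bm{\alpha}'\in\mathcal{A}_{i,j}$) and $\mathcal{A}$ is lower, the former lies in $\mathcal{A}$ as well, which by (\ref{Aij}) says exactly $\bm{\beta}'\in\mathcal{A}_{i,j}$. Thus $\mathcal{A}_{i,j}$ is lower.

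Combining the two observations, $\mathscr{L}_{i,j}(\Xi)$ is presented in the form (\ref{Cartesian}) with lower set $\mathcal{A}_{i,j}$ and injective coordinate functions, hence it is $(d-1)$-dimensional $\mathcal{A}_{i,j}$-Cartesian, for every $1\le i\le d$ and $0\le j\le m_i$. I do not expect a genuine obstacle here: the content is essentially bookkeeping, and lowerness of $\mathcal{A}_{i,j}$ drops out because fixing the $i$-th coordinate to $j$ is compatible with the product order. The most error-prone part is keeping the slot-deletion and slot-insertion consistent across $\Xi$, $\mathscr{L}_{i,j}(\Xi)$, and $\mathcal{A}_{i,j}$, which I would control by stating once the slot-insertion bijection between $\mathbb{N}_0^{d-1}$ and the affine slice $\{\bm{\gamma}\in\mathbb{N}_0^d:\gamma_i=j\}$ and reusing it throughout.
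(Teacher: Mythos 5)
Your proposal is correct and follows essentially the same route as the paper: identify $\mathscr{L}_{i,j}(\Xi)$ with the image of $\mathcal{A}_{i,j}$ under the surviving injective coordinate functions via the slice description (\ref{carproof})--(\ref{carproof2}), and verify lowerness of $\mathcal{A}_{i,j}$ by inserting $j$ into the $i$-th slot and invoking lowerness of $\mathcal{A}$. The paper's proof is just a terser version of the same bookkeeping.
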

\begin{proof}
Fix $1\leq i\leq d$, $0\leq j\leq m_i$.
For each $(\alpha_1, \ldots, \alpha_{i-1}, \alpha_{i+1}, \ldots, \alpha_d)\in \mathcal{A}_{i,j}$,
$\bm{0}\leq(\beta_1, \ldots, \beta_{i-1}, \beta_{i+1}, \ldots, \beta_d)\leq (\alpha_1, \ldots, \alpha_{i-1}, \alpha_{i+1}, \ldots, \alpha_d)$
implies that
$(\beta_1, \ldots, \beta_{i-1}, j, \beta_{i+1}, \ldots, \beta_d)\in \mathcal{A}$. Then
$(\beta_1, \ldots, \beta_{i-1}, \beta_{i+1}, \ldots, \beta_d)\in \mathcal{A}_{i,j}$ follows. Thus, $\mathcal{A}_{i,j}\subset \mathbb{N}_0^{d-1}$ is lower.
By (\ref{carproof}), (\ref{carproof2}) and (\ref{Aij}),  we can conclude that  $\mathscr{L}_{i,j}(\Xi)$ is
$\mathcal{A}_{i,j}$-Cartesian.

\end{proof}

\begin{corollary}
Let $\Xi$, $\mathcal{A}$ and $\mathcal{A}_{i,j}$ be as in \emph{Lemma \ref{miancar}}. Define
\begin{align*}
    \mathcal{A}_{i,j}\oplus
j:=\{&(\alpha_1, \ldots, \alpha_{i-1}, j, \alpha_{i+1}, \ldots, \alpha_d):\\
&\forall
(\alpha_1, \ldots, \alpha_{i-1}, \alpha_{i+1}, \ldots, \alpha_d)\in
\mathcal{A}_{i,j}\}.
\end{align*}
Then for each $1\leq i\leq d$, we have
$$
\mathcal{A}=\bigcup\limits_{j=0}^{m_i}\mathcal{A}_{i,j}\oplus
j.
$$
\end{corollary}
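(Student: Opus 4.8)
The plan is to observe that the operations defining $\mathcal{A}_{i,j}$ (deleting the $i$-th entry from a slice of $\mathcal{A}$) and $\oplus j$ (re-inserting $j$ at position $i$) are mutually inverse on the slice of $\mathcal{A}$ at height $j$, so the whole statement collapses to the tautology that $\mathcal{A}$ is partitioned according to the value of its $i$-th coordinate, provided that value always lies in the range $\{0,\ldots,m_i\}$.

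First I would fix $1\leq i\leq d$ and unwind the definitions. By (\ref{Aij}), a tuple $(\alpha_1,\ldots,\alpha_{i-1},\alpha_{i+1},\ldots,\alpha_d)$ lies in $\mathcal{A}_{i,j}$ precisely when $(\alpha_1,\ldots,\alpha_{i-1},j,\alpha_{i+1},\ldots,\alpha_d)\in\mathcal{A}$. Re-inserting $j$ at position $i$ via $\oplus j$ therefore gives
$$
\mathcal{A}_{i,j}\oplus j=\{\bm{\alpha}\in\mathcal{A}:\alpha_i=j\}.
$$
This identification immediately yields one inclusion: each $\mathcal{A}_{i,j}\oplus j$ is a subset of $\mathcal{A}$, so $\bigcup_{j=0}^{m_i}\mathcal{A}_{i,j}\oplus j\subseteq\mathcal{A}$.

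Next I would establish the reverse inclusion by checking that the union over $j=0,\ldots,m_i$ exhausts $\mathcal{A}$. For this it suffices to show that every $\bm{\alpha}\in\mathcal{A}$ satisfies $0\leq\alpha_i\leq m_i$, since then $\bm{\alpha}$ belongs to its own slice $\mathcal{A}_{i,\alpha_i}\oplus\alpha_i$. The bound comes from the Cartesian structure: the $i$-th coordinates of the points of $\Xi$ are the values $y_i(\alpha_i)$ with $\bm{\alpha}\in\mathcal{A}$, and since $y_i$ is injective, distinct indices $\alpha_i$ correspond to distinct hyperplanes perpendicular to the $x_i$-axis. As $\Xi$ is covered by exactly $m_i+1$ such hyperplanes and $\mathcal{A}$ is lower, the set of values attained by $\alpha_i$ is precisely $\{0,1,\ldots,m_i\}$; in particular $\alpha_i\leq m_i$, which completes the proof.

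I expect no genuine obstacle, as the corollary is a direct bookkeeping consequence of the definitions together with the lowerness of $\mathcal{A}$ recorded in Lemma \ref{miancar}. The only point deserving a word of care is matching the index range $\{0,\ldots,m_i\}$ with the full set of $i$-th coordinates occurring in $\mathcal{A}$; this is exactly where the injectivity of $y_i$ and the count of $m_i+1$ covering hyperplanes enter, and it is what guarantees the union is neither deficient nor redundant.
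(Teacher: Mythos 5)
Your proof is correct, and it fills in exactly the definitional bookkeeping the paper leaves implicit (the corollary is stated without proof as an immediate consequence of Lemma \ref{miancar} and the definitions). Your identification $\mathcal{A}_{i,j}\oplus j=\{\bm{\alpha}\in\mathcal{A}:\alpha_i=j\}$ together with the observation that lowerness of $\mathcal{A}$ and the count of $m_i+1$ covering hyperplanes force the $i$-th coordinates to range exactly over $\{0,\ldots,m_i\}$ is precisely the intended argument.
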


\begin{proposition}\label{carquot}
Let $\Xi\subset \mathbb{F}^d$ be an $\mathcal{A}$-Cartesian set, and suppose that $\mathfrak{N}$
is as in \textup{(\ref{carmonbas1})}. Then $\mathcal {I}(\Xi)$ has a unique monomial order quotient basis $\mathfrak{N}$.
\end{proposition}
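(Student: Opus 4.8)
The plan is to read the statement off Theorem \ref{Sauer} together with the criterion of Theorem \ref{gctest2}, and then to indicate how one could instead argue directly from the Cartesian geometry through Proposition \ref{gctest11}.

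First I would record that $\mathfrak{N}$ is at least \emph{one} monomial order quotient basis for $\mathcal{I}(\Xi)$. Since $\Xi$ is a finite point set, $\mathcal{I}(\Xi)$ is zero-dimensional; by Theorem \ref{Sauer} the Gr\"{o}bner \'{e}scalier $\mathcal{N}_{\prec}(\mathcal{I}(\Xi))$ equals $\mathfrak{N}$ for every monomial order $\prec$, and since every Gr\"{o}bner \'{e}scalier forms a monomial order quotient basis, $\mathfrak{N}$ qualifies.

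Now comes the only substantive step. Theorem \ref{Sauer} asserts exactly that $\mathcal{N}_{\prec}(\mathcal{I}(\Xi)) = \mathcal{N}_{\prec'}(\mathcal{I}(\Xi)) = \mathfrak{N}$ for all monomial orders $\prec, \prec'$, which is condition (2) of Theorem \ref{gctest2} with common value $\mathfrak{N}$. Invoking the equivalence of (1) and (2) there, I conclude that $\mathcal{I}(\Xi)$ has a \emph{unique} monomial order quotient basis; combined with the previous paragraph, that unique basis is forced to be $\mathfrak{N}$. The point is that Theorem \ref{gctest2} upgrades coincidence of the Gr\"{o}bner \'{e}scaliers (over all \emph{monomial orders}) to uniqueness of the quotient basis (over \emph{all} order ideals), a genuinely stronger conclusion, as the quotient basis $\{1, x_1, x_2, x_1 x_2\}$ that is no Gr\"{o}bner \'{e}scalier already illustrates. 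On this route there is essentially no obstacle: the content is all pre-packaged in Theorems \ref{Sauer} and \ref{gctest2}.

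If one instead wanted a self-contained proof that exhibits the Cartesian structure and avoids Theorem \ref{Sauer}, I would apply Proposition \ref{gctest11} with $O = \mathfrak{N}$ and verify, for every $\bm{x}^{\bm{\alpha}} \in \mathcal{C}[\mathfrak{N}]$, that the box $\{\bm{x}^{\bm{\beta}} : \bm{0} \le \bm{\beta} \le \bm{\alpha}\}$ is linearly dependent modulo $\mathcal{I}(\Xi)$, by producing an explicit $f \in \mathcal{I}(\Xi)$ whose support lies in that box and which involves $\bm{x}^{\bm{\alpha}}$ with nonzero coefficient. When some $\alpha_i$ reaches the number of distinct $i$-th coordinates of $\Xi$, the univariate vanishing polynomial $\prod_{c}(x_i - y_i(c))$ is already supported on the box and suffices. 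The main obstacle is the complementary case, where each $\alpha_i$ is below that count yet $\bm{\alpha} \notin \mathcal{A}$: here I would slice in some direction $x_i$ using $\mathcal{A} = \bigcup_{j} \mathcal{A}_{i,j} \oplus j$ and the fact from Lemma \ref{miancar} that every slice $\mathscr{L}_{i,j}(\Xi)$ is $\mathcal{A}_{i,j}$-Cartesian, then build $f$ by multiplying the partial factor $\prod_{j < \alpha_i}(x_i - y_i(j))$ against a box-supported vanishing polynomial of the relevant lower-dimensional slice supplied by induction on $d$, and finally check that no monomial of $f$ escapes $[0, \bm{\alpha}]$. That support bookkeeping is the delicate part of the self-contained route.
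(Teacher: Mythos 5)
Your proof is correct, but your main route is genuinely different from (and much shorter than) the paper's. You derive the result in two steps: Theorem \ref{Sauer} says that $\mathcal{N}_{\prec}(\mathcal{I}(\Xi))=\mathfrak{N}$ for \emph{every} monomial order $\prec$, which is precisely condition (2) of Theorem \ref{gctest2}; the equivalence $(2)\Leftrightarrow(1)$ then yields uniqueness, and since $\mathfrak{N}$, being a Gr\"{o}bner \'{e}scalier, is itself a monomial order quotient basis, it must be the unique one. This is logically sound and non-circular, as both Theorem \ref{Sauer} (imported from Sauer) and Theorem \ref{gctest2} precede the proposition and do not depend on it; you are also right that the passage from ``all \'{e}scaliers coincide'' to ``all order-ideal quotient bases coincide'' is a genuine strengthening, witnessed by the paper's example $\{1,x_1,x_2,x_1x_2\}$. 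The paper, by contrast, does \emph{not} invoke Theorem \ref{Sauer} at all: it runs the self-contained induction on $d$ that you sketch as your alternative, applying Proposition \ref{gctest11} to $O=\mathfrak{N}$ and, for each corner monomial $\bm{x}^{\bm{\gamma}}\in\mathcal{C}[\mathfrak{N}]$, exhibiting an explicit element of $\mathcal{I}(\Xi)$ supported in the box $\{\bm{x}^{\bm{\beta}}:\bm{0}\leq\bm{\beta}\leq\bm{\gamma}\}$ --- namely the full hyperplane product $\prod_{j=0}^{m_k}(x_k-y_k(j))$ when $\bm{\gamma}=(0,\ldots,0,m_k+1)$, and otherwise $\hat{g}\cdot\prod_{i=0}^{j-1}l_{k,i}$ with $\hat{g}$ supplied by the inductive hypothesis on the slice $\mathscr{L}_{k,j}(\Xi)$ via Lemma \ref{miancar} and the nesting $\mathscr{L}_{k,0}(\Xi)\supseteq\cdots\supseteq\mathscr{L}_{k,m_k}(\Xi)$. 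The trade-off: your route outsources the substantive content to the quoted external theorem, while the paper's route is independent of \cite{Sau2004} (and in effect re-proves Theorem \ref{Sauer} as a by-product), at the cost of the support bookkeeping you correctly identify as the delicate step.
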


\begin{proof}
We will prove this proposition by induction on $d$. When $d=1$, the proposition is obviously true.
Suppose that for any $(k-1)$-dimensional Cartesian set, the statement holds.

Let $\Xi\subset \mathbb{F}^k$ be an $\mathcal{A}$-Cartesian set where $\mathcal{A}\subset \mathbb{N}_0^k$, and
assume that $m_i$, $\mathscr{L}_{i, j}(\Xi)$ are as in \textup{(\ref{3dCartesianteste})}. For an arbitrary $\bm{x}^{\bm{\gamma}}\in \mathcal{C}[\mathfrak{N}]$,
there are two cases which must be examined.

Case I: $\bm{\gamma}=(0, \ldots, 0, m_k+1)$.

Let $g(x_k)=\prod_{j=0}^{m_k}l_{k, j}$.
Since
$l_{k,0}, l_{k,1}, \ldots, l_{k, m_k}$ are affine hyperplanes
covering $\Xi$, then $g(x_k)\in \mathcal {I}(\Xi)$ follows.
Furthermore,
it follows from the arguments in Theorem \ref{3dCartesiantest} that
\begin{equation}\label{ld}
l_{k,j}=x_k-y_k(j), \quad j=0,\ldots,m_k,
\end{equation}
which implies that  $\{{\bm{x}}^{\bm{\beta}}: 0\leq \bm{\beta}\leq \bm{\gamma}\}$ is linearly dependent modulo $\mathcal {I}(\Xi)$.

Case II: $\bm{\gamma}=(\gamma_1, \ldots, \gamma_{k-1}, j)$ for some $0\leq j\leq
m_k$, and $x_1^{\gamma_1}\ldots x_{k-1}^{\gamma_{k-1}}\in \mathcal
{C}[\mathcal{N}_{k,j}]$ where
$$
\mathcal{N}_{k,j}=\{x_1^{\alpha_1}\cdots
x_{k-1}^{\alpha_{k-1}}:(\alpha_1, \ldots, \alpha_{k-1})\in \mathcal
{A}_{k,j}\}.
$$

By inductive
hypothesis, $\mathcal{N}_{k,j}$ is the unique monomial order quotient basis for
$\mathcal {I}(\mathscr{L}_{k,j}(\Xi))$, then there must exist $c_{(\alpha_1, \ldots, \alpha_{k-1})}\in \mathbb{F}$ such that polynomial
\begin{align*}
    \hat{g}(x_1, \ldots, x_{k-1})=&x_1^{\gamma_1}\cdots
x_{k-1}^{\gamma_{k-1}}+\\
   &\sum\limits_{(\alpha_1, \ldots, \alpha_{k-1})<(\gamma_1, \ldots, \gamma_{k-1})}
c_{(\alpha_1, \ldots, \alpha_{k-1})}x_1^{\alpha_1}\cdots
x_{k-1}^{\alpha_{k-1}}\in \mathcal
{I}(\mathscr{L}_{k,j}(\Xi))
\end{align*}
by Proposition \ref{gctest11}. Set
$$
g(\bm{x}):=\hat{g}(x_1, \ldots, x_{k-1})\prod_{i=0}^{j-1}l_{k,i},
$$
where the empty product is defined to be 1.
Since
$$
\mathscr{L}_{k,0}(\Xi)\supseteq \cdots \supseteq
\mathscr{L}_{k,m_k}(\Xi),
$$
it is easy to check
that $\hat{g}(x_1, \ldots, x_{k-1})$
vanishes at $\Xi \setminus \{\Xi\cap\{\cup_{i=0}^{j-1}
l_{k, i}\}\}$ and $\prod_{i=0}^{j-1}l_{k,i}$
vanishes at $\Xi \cap\{\cup_{i=0}^{j-1}
l_{k,i}\}$. Therefore, $g(\bm{x})\in\mathcal
{I}(\Xi)$. It follows that $\{{\bm{x}}^{\bm{\beta}}: 0\leq \bm{\beta}\leq \bm{\gamma}\}$ is linearly dependent modulo $\mathcal {I}(\Xi)$.

In short, from Proposition
\ref{gctest11}, we conclude that $\mathfrak{N}$ is the unique monomial order quotient basis for $\mathcal{I}(\Xi)$.

\end{proof}

Finally, the following two theorems reveal the relation between Cartesian sets and the point sets with unique associated monomial order quotient bases.
\begin{theorem}\label{2dCartesiantest}
A finite set of distinct points $\Xi\subset \mathbb{F}^2$ is Cartesian if
and only if it has a unique associated monomial order quotient basis.
\end{theorem}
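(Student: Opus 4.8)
The forward implication is immediate: if $\Xi$ is Cartesian then Proposition \ref{carquot} (with $d=2$) already provides the unique monomial order quotient basis $\mathfrak{N}$. All of the content therefore lies in the converse. Suppose $\Xi$ has a unique associated monomial order quotient basis. By Theorem \ref{gctest2} (equivalence of (1) and (3), applied to the two elimination orders $\prec_{lex(1)}$ and $\prec_{lex(2)}$), the two lexicographic Gr\"{o}bner escaliers $\mathcal{N}_{\prec_{lex(1)}}(\mathcal{I}(\Xi))$ and $\mathcal{N}_{\prec_{lex(2)}}(\mathcal{I}(\Xi))$ must coincide in a single order ideal $O$. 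My plan is to show that this coincidence forces $\Xi$ to be Cartesian, which by Theorem \ref{3dCartesiantest} means verifying the nesting condition $\mathscr{L}_{i,0}(\Xi)\supseteq\cdots\supseteq\mathscr{L}_{i,m_i}(\Xi)$.

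The engine is a geometric description of each lexicographic escalier purely in terms of the fibre \emph{sizes} of $\Xi$. Let $\lambda$ be the partition obtained by listing in decreasing order the sizes $n_j=\#\mathscr{L}_{1,j}(\Xi)$ of the fibres over the $x_1$-axis, and let $\mu$ be the analogous partition of the sizes $p_k=\#\mathscr{L}_{2,k}(\Xi)$ of the fibres over the $x_2$-axis. The key claims are: (a) the escalier $\mathcal{N}_{\prec_{lex(2)}}(\mathcal{I}(\Xi))$ (elimination of $x_2$) is the order ideal whose columns, indexed by the $x_1$-degree, have heights exactly $\lambda$; and (b) by the symmetric statement, $\mathcal{N}_{\prec_{lex(1)}}(\mathcal{I}(\Xi))$ (elimination of $x_1$) is the order ideal whose rows, indexed by the $x_2$-degree, have widths exactly $\mu$. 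Crucially, both shapes depend only on the fibre sizes and not on the actual coordinates.

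Granting (a) and (b), the converse closes quickly. Since the two escaliers coincide in $O$, on one hand $O$ has column-height partition $\lambda$, and on the other hand its row-width partition is $\mu$; but for any two-dimensional order ideal the row-width partition is the conjugate of the column-height partition, so $\mu=\lambda'$. The identity $\mu=\lambda'$ is exactly the equality (boundary) case of the Gale--Ryser constraints for $0/1$-matrices with prescribed margins, and in that case the incidence matrix of $\Xi$, with rows and columns sorted by fibre size, is forced to be the staircase of $\lambda$; equivalently its support is a lower set, which is precisely the nesting condition of Theorem \ref{3dCartesiantest}. Hence $\Xi$ is Cartesian.

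The main obstacle is establishing (a) (whence (b) follows by symmetry). I would run the greedy/FGLM construction of the escalier in increasing $\prec_{lex(2)}$-order, processing row by row in the $x_2$-degree, and show that at $x_2$-degree $b$ precisely those columns whose fibre has more than $b$ points still contribute a fresh evaluation direction; consequently exactly the lowest $\#\{j:n_j>b\}$ monomials $x_2^{b},x_1x_2^{b},\ldots$ are admitted, while the next is dependent. The independence of the admitted monomials and the dependence of the first excluded one is a (confluent) Vandermonde computation in the distinct $x_1$-coordinates, and it is this computation that produces the coordinate-independence of the shape. I expect the careful bookkeeping of this Vandermonde argument to be the technical heart of the proof. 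It is also the genuinely two-dimensional step: the rigidity that column heights and row widths are conjugate partitions has no analogue for $d\geq 3$, which is consistent with the existence, noted in the introduction, of non-Cartesian point sets with a unique basis in higher dimensions.
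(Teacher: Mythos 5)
Your argument is correct in outline, but it takes a genuinely different route from the paper, whose entire proof of Theorem \ref{2dCartesiantest} is a two-line appeal to Lemma 2.1 of \cite{Cra2004} and Theorem 6 of \cite{WZD2010}. You instead make the mechanism explicit: the forward direction via Proposition \ref{carquot}, and the converse by combining (i) the fact that in two variables each lexicographic escalier is determined solely by the fibre-size partition of the corresponding coordinate projection, (ii) the conjugacy of row and column partitions of a planar order ideal, and (iii) the extremal (equality) case of Gale--Ryser forcing the incidence matrix to be the staircase, hence the nesting condition of Theorem \ref{3dCartesiantest}. Steps (ii) and (iii) are complete as stated, and your closing remark correctly identifies why the argument is intrinsically two-dimensional. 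The one soft spot is your claim (a): you present it as the ``technical heart'' and only sketch the greedy/Vandermonde construction rather than carrying it out. That claim is true, but rather than re-deriving it you could simply invoke the Cerlienco--Mureddu correspondence \cite{LM1995} or the lex game \cite{FRR2006} (both already in the paper's bibliography, and both of which in the bivariate case give exactly the fibre-size description of $\mathcal{N}_{\prec_{lex(i)}}(\mathcal{I}(\Xi))$); with that citation in place your proof is complete and, unlike the paper's, essentially self-contained. What the paper's route buys is brevity; what yours buys is a transparent combinatorial explanation (conjugate partitions plus Gale--Ryser rigidity) of \emph{why} uniqueness of the quotient basis forces the Cartesian structure in the plane.
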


\begin{proof}
Lemma 2.1 of \cite{Cra2004} and
Theorem 6 of \cite{WZD2010} implies this theorem immediately.
\end{proof}

\begin{theorem}\label{relation}
For every $d\geq 3$, there exists at least one non-Cartesian point set in $\mathbb{F}^d$ which has a unique associated monomial order quotient basis.
\end{theorem}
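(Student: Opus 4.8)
The plan is to settle the base case $d=3$ by an explicit construction and then promote it to every $d\ge 3$ by appending constant coordinates. For $d=3$ I would take the six-point set
$$\Xi_0=\{(0,0,0),(1,0,0),(0,1,0),(0,0,1),(1,0,1),(2,0,1)\}\subset\mathbb{F}^3.$$
Its two slices perpendicular to the $x_3$-axis project to $\mathscr{L}_{3,0}(\Xi_0)=\{(0,0),(1,0),(0,1)\}$ and $\mathscr{L}_{3,1}(\Xi_0)=\{(0,0),(1,0),(2,0)\}$, which are chosen to be incomparable: neither contains the other. Hence the nesting condition (\ref{3dCartesianteste}) fails in direction $3$, and Theorem \ref{3dCartesiantest} shows immediately that $\Xi_0$ is \emph{not} Cartesian.

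The core of the argument is that $\Xi_0$ nevertheless has a unique associated monomial order quotient basis. I would first check that the order ideal $O=\{1,x_1,x_1^2,x_2,x_3,x_1x_3\}$ is a monomial order quotient basis for $\mathcal{I}(\Xi_0)$, by evaluating these six monomials at the six points and confirming that the resulting $6\times6$ matrix is nonsingular. Uniqueness then follows from Proposition \ref{gctest11}: I would list the corner set $\mathcal{C}[O]=\{x_1^3,\,x_2^2,\,x_3^2,\,x_1x_2,\,x_2x_3,\,x_1^2x_3\}$ and, for each corner $\bm{x}^{\bm{\gamma}}$, exhibit an explicit polynomial of $\mathcal{I}(\Xi_0)$ forcing $\{\bm{x}^{\bm{\beta}}:\bm{0}\le\bm{\beta}\le\bm{\gamma}\}$ to be linearly dependent modulo $\mathcal{I}(\Xi_0)$. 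The witnesses come in three flavours: the univariate vanishing polynomials $x_1(x_1-1)(x_1-2)$, $x_2(x_2-1)$ and $x_3(x_3-1)$ for the pure-power corners $x_1^3,x_2^2,x_3^2$; the monomials $x_1x_2$ and $x_2x_3$, which lie in $\mathcal{I}(\Xi_0)$ since $x_2$ is nonzero only at $(0,1,0)$ (where $x_1=x_3=0$), for the corners $x_1x_2$ and $x_2x_3$; and the mixed polynomial $x_1(x_1-1)(x_3-1)$ for the diagonal corner $x_1^2x_3$, which vanishes on $\Xi_0$ because $x_1(x_1-1)$ annihilates every point but $(2,0,1)$, at which $x_3-1=0$.

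For $d>3$ I would pad the example, setting $\Xi:=\Xi_0\times\{(0,\ldots,0)\}\subset\mathbb{F}^d$. Non-Cartesianness is inherited, since the slice sets perpendicular to the $x_3$-axis are unchanged apart from appended zero coordinates, so Theorem \ref{3dCartesiantest} again fails in direction $3$. Uniqueness is inherited too: as $x_j\in\mathcal{I}(\Xi)$ for $4\le j\le d$, the quotient ring is unaffected and $O$ remains a basis, while the only additional corners are $x_4,\ldots,x_d$, each with box $\{1,x_j\}$ linearly dependent modulo $\mathcal{I}(\Xi)$; Proposition \ref{gctest11} then yields uniqueness once more.

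The step I expect to be the real obstacle is the uniqueness check in the base case, and in particular the diagonal corner $x_1^2x_3$. Highly symmetric attempts---say a subset of the grid $\{0,1\}^3$---tend to make several degree-two monomials mutually interchangeable in the quotient basis, so the set fails to have a \emph{unique} basis exactly where one would like to cash in its non-Cartesian shape. The incomparable-slice construction is engineered so that the single mixed relation $x_1(x_1-1)(x_3-1)$ pins down the last corner; verifying that this leaves no monomial free to be swapped into $O$ is the crux.
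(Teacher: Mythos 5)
Your proposal is correct, and it follows the same overall strategy as the paper: exhibit an explicit point set in $\mathbb{F}^3$, rule out Cartesianness via the slice criterion of Theorem \ref{3dCartesiantest}, certify uniqueness of the quotient basis via one of the paper's criteria, and pad with zero coordinates for $d>3$. The differences are in the details. The paper uses a smaller, four-point set $\Xi_d=\{(0,0,0,\bm{0}),(0,1,0,\bm{0}),(0,0,1,\bm{0}),(1,0,1,\bm{0})\}$, asserts (without computation) that $\{x_1(x_1-1),x_2(x_2-1),x_3(x_3-1),x_1x_2,x_2x_3,(x_3-1)x_1,x_4,\ldots,x_d\}$ is a universal Gr\"obner basis, and concludes via Theorem \ref{gctest2}; it also handles non-Cartesianness in higher dimension by induction through Lemma \ref{miancar} rather than by directly observing that the $x_3$-slices are unchanged. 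You instead use a six-point set and verify uniqueness through Proposition \ref{gctest11}, explicitly listing the corner set $\{x_1^3,x_2^2,x_3^2,x_1x_2,x_2x_3,x_1^2x_3\}$ and producing a vanishing witness supported in each corner's box ($x_1(x_1-1)(x_1-2)$, $x_2(x_2-1)$, $x_3(x_3-1)$, $x_1x_2$, $x_2x_3$, $x_1(x_1-1)(x_3-1)$), all of which check out, as does the nonsingularity of the $6\times6$ evaluation matrix for $O=\{1,x_1,x_1^2,x_2,x_3,x_1x_3\}$ and the observation that padding only adds the corners $x_4,\ldots,x_d$ with trivially dependent boxes $\{1,x_j\}$. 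What your route buys is a fully self-contained verification that does not require knowing a reduced Gr\"obner basis; what it costs is a larger example and more case-checking than the paper's four-point set, which would equally well succumb to your corner-by-corner method.
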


\begin{proof}
Let
\begin{align*}
    \Xi_{d}=\{(0,0,0,\underset{d-3}{\underbrace{{0,\ldots,0}}}),(0,1,0,\underset{d-3}{\underbrace{{0,\ldots,0}}}),
    (0,0,1,\underset{d-3}{\underbrace{{0,\ldots,0}}}),(1,0,1,\underset{d-3}{\underbrace{{0,\ldots,0}}})\}
\end{align*}
with $d\geq 3$.
It is easy to check that $\{x_1(x_1-1), x_2(x_2-1), x_3(x_3-1), x_1 x_2, x_2 x_3, (x_3-1)x_1, x_4, \ldots, x_d\}$
forms a universal Gr\"{o}bner basis for $\mathcal{I}(\Xi_{d})$.
Hence, Theorem \ref{gctest2} implies that $\Xi_{d}$ has a unique associated monomial order quotient basis.

We now claim that $\Xi_{d}$ is non-Cartesian. To see this, we use induction on $d$.
When $d=3$, $\Xi_{3}$ is illustrated in (a) of Figure \ref{eg2}.
\begin{figure}[!htbp]
\centering
\subfigure[$\Xi_3$]{\includegraphics[width=6cm, height=6cm]{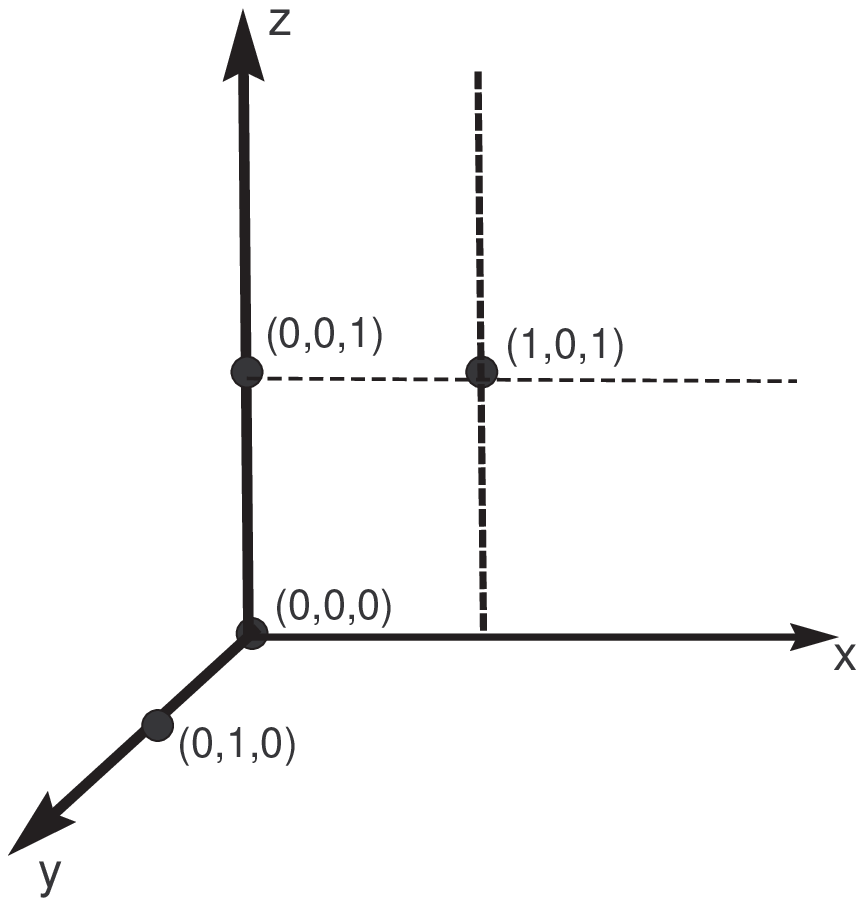}}
\subfigure[$\{1, x_3, x_2, x_1\}$]{\includegraphics[width=6cm, height=6cm]{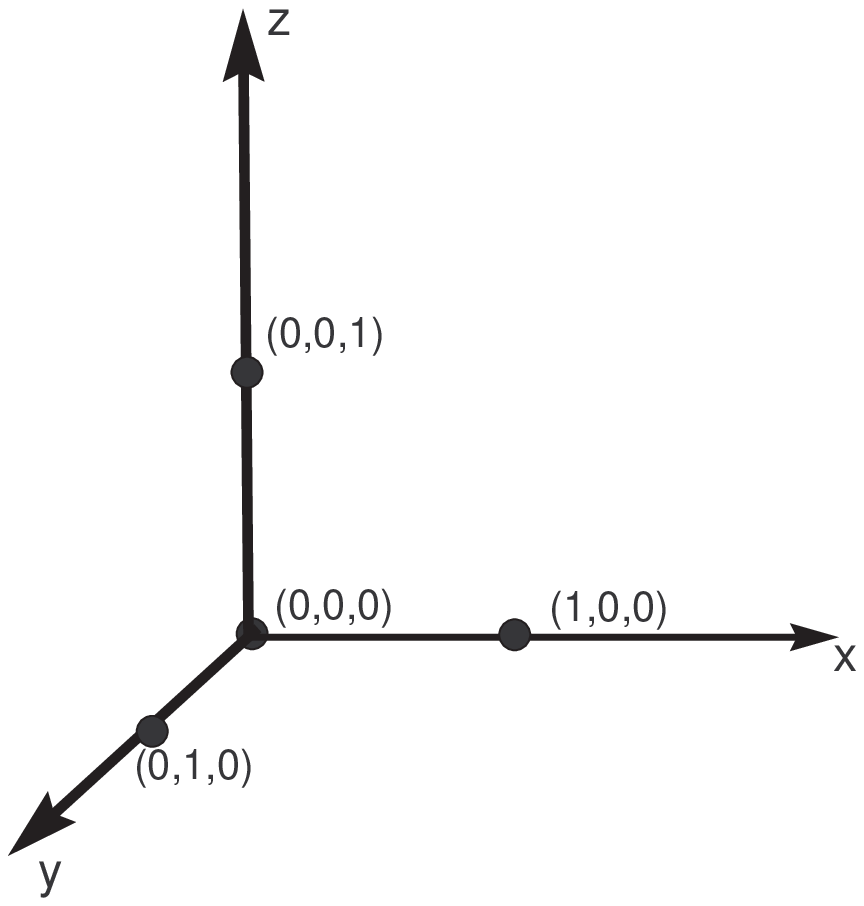}}
\caption{$\Xi_3$ and its unique associated monomial order quotient basis}\label{eg2}
\end{figure}

Recalling Theorem \ref{3dCartesiantest}, we can obtain
\begin{align*}
\mathscr{L}_{1,0}(\Xi_3)&=\{(0, 0), (0, 1), (1, 0)\}, &\mathscr{L}_{1,1}(\Xi)&=\{(0, 1)\}, \\
\mathscr{L}_{2,0}(\Xi_3)&=\{(0, 0), (0, 1), (1, 1)\}, &\mathscr{L}_{2,1}(\Xi)&=\{(0, 0)\}, \\
\mathscr{L}_{3,0}(\Xi_3)&=\{(0, 0), (0, 1)\}, &\mathscr{L}_{3,1}(\Xi)&=\{(0, 0), (1, 0)\}.
\end{align*}
We cover $\Xi_{3}$ with three sets of affine hyperplanes
$$
\{l_{1,0}, l_{1,1}\}, \{l_{2,0}, l_{2,1}\}, \{l_{3,0}, l_{3,1}\}
$$
where
$l_{1,0}=x_1-0, l_{1,1}=
x_1-1,l_{2,0}=x_2-0, l_{2,1}=
x_2-1,
l_{3,0}=x_3-0, l_{3,1}=x_3-1$.
Since
$\mathscr{L}_{3,0}(\Xi_{3})\not\supseteq\mathscr{L}_{3,1}(\Xi_3)$,
$\Xi_3$ is not Cartesian according to Theorem \ref{3dCartesiantest}.

Assume that $\Xi_{d-1}$ is non-Cartesian. If $\Xi_{d}$ is Cartesian, then Lemma \ref{miancar}
also leads to a contradiction to our
assumption.
\end{proof}

\section*{Acknowledgements}
The authors wish to thank the Reviewers for valuable suggestions and comments
that have improved the presentation of the paper.

\end{document}